\theoremstyle{plain}
\newtheorem{theorem}{Theorem}[section]
\newtheorem{prop}[theorem]{Proposition}
\newtheorem{cor}[theorem]{Corollary}
\theoremstyle{remark}
\theoremstyle{definition}
\newtheorem{defn}{Definition}[section]
\newcommand{\PP}{\mathbb P}
\newcommand{\lra}{\longrightarrow}
\newcommand\QQ{{\mathbb Q}}
\numberwithin{table}{section}
\begin{document}
\title
{Classification of double octic Calabi--Yau threefolds} 
 \author{S{\l}awomir Cynk}
 \address{Institute of Mathematics, Jagiellonian University,
{\L}ojasiewicza 6, 30-348 Krak\'ow, Poland 
}
\address{Institute of Mathematics of the
Polish Academy of Sciences, ul. \'Sniadeckich 8, 00-956 Warszawa,
Poland}

 \email{slawomir.cynk@uj.edu.pl}
\author{Beata Kocel--Cynk}
\address{Institute of Mathematics,
         Cracow University of Technology\\ 
         ul.~Warszawska~24,
         31-155~Krak\'ow,
         POLAND}
\email{bkocel@pk.edu.pl}

\thanks{Research partially supported by the National Science Center grant no. 2014/13/B/ST1/00133}
\thanks{This work was partially supported by the grant 346300 for
  IMPAN from the Simons Foundation and the matching 2015-2019 Polish
  MNiSW fund} 
\keywords{Calabi--Yau threefold, double octic, automorphisms}
\subjclass[2010]{Primary: 14J32, Secondary: 14D06}

\maketitle
\begin{abstract}
In the present paper we propose a combinatorial approach to study the
so called double octic Clabi--Yau threefolds. We use this description
to give a complete  classification of double octics with
$h^{1,2}\le1$ and to derive their geometric properties (Kummer surface
fibrations, automorphisms, special elements in families).
\end{abstract}

\section*{Introduction}

Let $\pi:X\lra \PP^{3}$ be a double covering of the projective space
branched along an arrangement of eight planes. If the arrangement
satisfies mild conditions (no six planes intersect, no four contain a
line) then there exists a resolution of singularities of $X$ which is
a projective Calabi--Yau threefold called a double octic
(\cite{CSz}). Double octic Calabi--Yau are very suitable for explicit
computations, their invariants (topological Euler characteristic,
Hodge numbers) can be easily computed (\cite{CvS}). On the other hand
this class is rich enough to provide examples of several interesting
phenomena.

There exist double octic Calabi--Yau threefolds in characteristic 3
non--liftable to characteristic zero (\cite{CvS3}).  Computation of
Picard--Fuchs operators of one--parameter families of double octic
Calabi--Yau exhibited examples with particular properties: an example
without a point of Maximal Unipotent Monodromy or an example with
three points of Maximal Unipotent Monodromy and different instanton
numbers (\cite{CvS2}). 
Double octic Calabi--Yau threefolds have elliptic curve and K3 surface
fibrations, K3 fibrations (K3 fibrations of rigid double octics were
studied in \cite{Bor}).

Double octic Calabi--Yau threefolds are closely related to
desingalarized fiber products of rational elliptic surfaces
(\cite{Sch}), a double octic can be considered  as a fiber--wise
Kummer construction. This relation and its application to modularity
of certain Calabi--Yau threefolds (also non--rigid cf. \cite{HV}) and
existence of correspondences was studied in \cite{MK2}. 

Double octic Calabi--Yau threefolds were studied by C. Meyer
(\cite{Meyer}), he carried out a systematic study of huge number of
examples with integer coefficients in particular he gave
11 examples of rigid double octics and 63 examples of one--parameter
families. Our main task is to made a complete classification of octic
arrangements, contrary to Meyer we did not study explicit
examples. Instead we used combinatorial data  called
``incidence table'' which are independent of the field considered.
Conversely, from an incidence table we can recover equation of an
octic arrangement and verify if two families with identical incidence
tables are projectively equivalent.
From the incidence table we can read singularities of the arrangement,
presentations as a fiber--wise Kummer fibration, permutations of
planes that preserves the incidences and  projective
transformations of projective space that preserve the octic arrangement.
 
We use this combinatorial approach to produce a complete list of examples
with $h^{12}\le1$ and describe their geometry.

\section{Double octics}

Let $D=D_{1}\cup\dots\cup D_{8}$ be a sum of eight planes in
$\PP^{3}$, we shall call $D$ an \emph{octic arrangement} iff
\begin{itemize}
\item the intersection $D_{i_{1}}\cap\dots\cap D_{i_{6}}$ of any six is empty
  ($1\le i_{1}\le i_{2}\le\dots\le i_{6}\le 8$),
\item the intersection $D_{i_{1}}\cap D_{i_{2}}\cap D_{i_{3}}\cap
  D_{i_{4}}$ of any four do not contain a line i.e. it is empty set or one
  point ($1\le i_{1}\le i_{2}\le i_{3}\le i_{4}\le 8$).
\end{itemize}
The surface $D$  is singular at the intersection points of
components, there are five type of singularities of octic arrangements
satisfying the above two restrictions
\begin{itemize}
\item [$l_{2}$] double line,
\item [$l_{3}$] triple line,
\item [$p_{3}$] triple point (not on a triple line),
\item [$p^{0}_{4}$] fourfold point not lying on a triple line,
\item [$p^{1}_{4}$] fourfold point lying on a one triple line,
\item [$p^{0}_{5}$] fivefold point not lying on a triple line,
\item [$p^{1}_{5}$] fivefold point lying on  one triple line,
\item [$p^{2}_{5}$] fivefold point lying on a one triple line.
\end{itemize}
We denote the numbers of multiple lines and points by
$l_{2}, l_{3},p_{3}, p_{4}^{0}, p_{4}^{1}, p_{5}^{0}, p_{5}^{1}, p_{5}^{2}$. 
These numbers are related by the following two relations \cite[Lem.~3.4]{CSz}
\begin{eqnarray*}
  p_{3}+4p_{4}^{0}+3p_{4}^{1}+10p_{5}^{0}+9p_{5}^{1}+8p_{5}^{2}+l_{3}&=&56\\
  p_{4}^{1}+2p_{5}^{1}+4p_{5}^{2}&=&5l_{3}.
\end{eqnarray*}

The double cover of $\PP^{3}$ branched along $D$ admits a resolution of
singularities $X$ that is a smooth Calabi--Yau threefold, the
topological Euler characteristic of $X$ is given by (\cite[Thm.~3.5]{CSz})
\[e(X)=40+4p_{4}^{0}+3p_{4}^{1}+16p_{5}^{0}+18p_{5}^{1}+20p_{5}^{2}+l_{3}.\]

The Kuranishi space (universal deformation) of a double octic $X$ is
given by the space of equisingular deformations (i.e. family of octic
arrangements preserving the types of singularities) modulo trivial
deformations (induced by projective automorphisms of
$\PP^{3}$). Dimension of the Kuranishi space equals the Hodge number
$h^{1,2}(X)$ and can be computed with computer algebra system via
equisingular ideal (cf. \cite{CvS}).  

C.~Meyer in \cite{Meyer} carried out an extensive computer search for
double octic Calabi--Yau threefolds. His method was to study arrangements with
small integer coefficients, compute for them numbers of singularities
of various types ($l_{2}, l_{3},p_{3}, p_{4}^{0}, p_{4}^{1},
p_{5}^{0}, p_{5}^{1}, p_{5}^{2}$) and the Hodge numbers $h^{1,1},
h^{1,2}$. As those numerical invariants do not classify arrangements
he also classified the types of all subarrengements of six
planes. 

Among 450 types of octic arrangements listed in \cite[App.~A]{Meyer}
there are 11 producing rigid (admitting no deformations, equivalently
with the Hodge number $h^{1,2}=0$) Calabi--Yau threefolds and
63 with $h^{1,2}=1$. 
For the 74 arrangement  Meyer gave sample equations of the eight
planes, we shall use equations and the numbering of the arrangements
from \cite{Meyer} (we only corrected equation of Arr. No. 35 and
change the parametrizations of Arr. No. 275 and 276).  

\section{Incidence table}

Classification of double octic Calabi--Yau threefolds is a delicate
matter, Meyer used two measures to classify octic arrangements: the numerical
invariants (number of singular points of various types and the Hodge
numbers) and the ordered list of types of subarrangements of six
planes. However two completely different octic arrangements
(f.i. rigid arrangements no. 32 and 69 in the Meyer list \cite{Meyer}) can give
birational Calabi--Yau threefolds. On the other hand the numerical
invariants do not determine the double octic. To avoid this
difficulties we shall introduce the following definitions
\begin{defn}
  Arrangements of eight planes $D_{1}=P_{1}^{1}\cup\dots\cup P_{8}^{1}$
  and $D_{2}=P_{1}^{2}\cup\dots\cup P_{8}^{2}$ are called
  combinatorially equivalent iff there is a permutation
  $\sigma\in S_{8}$ such that for each indices $1\le
  i_{1}<i_{2}<\dots<i_{k}\le8$ the intersections 
  \[P_{i_{1}}^{1}\cap\dots\cap P_{i_{k}}^{1} \qquad \text{ and } \qquad
  P_{\sigma(i_{1})}^{2}\cap\dots\cap P_{\sigma(i_{k})}^{2}\] have the same dimension
  and (provided non--empty) give the same singularity type in $D_{1}$
  and $D_{2}$.  
\end{defn}
\begin{defn}
The \emph{incidence table} of an octic arrangement
$D=P_{1}\cup\dots\cup P_{8}$ is the the sorted list of all quadruples $1\le
i_{1}<i_{2}<i_{3}<i_{4}\le8$ such that the planes    
$P_{i_{1}},\dots P_{i_{4}}$ intersects.

The\emph{minimal incidence table} of an octic arrangement is the
minimum of incidences tables over all permutations of the planes. 
\end{defn}
The minimal incidence table uniquely
determines the combinatorial equivalence type of an arrangement. 
\begin{prop}
  Arrangements of eight planes $D_{1}=P_{1}^{1}\cup\dots\cup P_{8}^{1}$
  and $D_{2}=P_{2}^{1}\cup\dots\cup P_{8}^{2}$ are   combinatorially
  equivalent iff they have equal minimal incidence tables.
\end{prop}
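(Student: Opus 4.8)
The plan is to rephrase both sides in terms of the natural action of $S_{8}$ on incidence tables and then to show that the quadruple incidence table already encodes the entire combinatorial type. Write $I(D)$ for the incidence table of $D$ with its given labelling, i.e.\ the set of quadruples $1\le i_{1}<i_{2}<i_{3}<i_{4}\le 8$ whose four planes meet. A relabelling $\tau\in S_{8}$ sends $I(D)$ to $\tau\cdot I(D)$, and the minimal incidence table is by definition the lexicographically smallest element of the orbit $\{\tau\cdot I(D):\tau\in S_{8}\}$; hence two arrangements have equal minimal incidence tables if and only if their incidence tables lie in one and the same $S_{8}$-orbit. The proposition is therefore equivalent to the statement that $D_{1}$ and $D_{2}$ are combinatorially equivalent if and only if there is a $\sigma\in S_{8}$ with $\sigma\cdot I(D_{1})=I(D_{2})$.

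The forward implication is immediate. A combinatorial equivalence $\sigma$ preserves the dimension of every intersection $P_{i_{1}}\cap\dots\cap P_{i_{k}}$, in particular the property of being empty; applied to quadruples this says exactly that a quadruple $Q$ lies in $I(D_{1})$ iff $\sigma(Q)$ lies in $I(D_{2})$, that is $\sigma\cdot I(D_{1})=I(D_{2})$.

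The substance of the proposition is the converse, which rests on a single reconstruction lemma: the set $I(D)$ determines, in an $S_{8}$-equivariant way, the dimension and singularity type of every intersection $P_{i_{1}}\cap\dots\cap P_{i_{k}}$. I would prove this by the following case analysis, using the two octic conditions (no six planes meet, no four planes share a line), which guarantee that every point has multiplicity at most $5$ and that any three planes in $\PP^{3}$ meet in a single point or in a line. First, a triple $\{a,b,c\}$ is a triple line exactly when all five quadruples $\{a,b,c,d\}$ belong to $I(D)$: if they share a line each further plane crosses it in a point, so all five occur, whereas if they meet in a point $p$ then at most two further planes pass through $p$, so at most two such quadruples occur; consequently the type of each pair-line ($l_{2}$ or $l_{3}$) is read off by asking whether a third plane completes it to a triple line. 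Next I claim that a $5$-set is the plane-set of a fivefold point exactly when all five of its $4$-subsets lie in $I(D)$ (the nontrivial direction is the concurrency statement below). Granting this, the fourfold points are precisely the quadruples of $I(D)$ contained in no such fivefold $5$-set, and the triple points $p_{3}$ are the triples contained in no quadruple of $I(D)$; every intersection of five or more planes is then either one of these reconstructed fivefold points or empty. Finally the finer point-types are obtained by counting incident triple lines: a fourfold point is of type $p_{4}^{1}$ or $p_{4}^{0}$ according as one of the four $3$-subsets of its plane-set is a triple line or none is, and a fivefold point is of type $p_{5}^{0},p_{5}^{1},p_{5}^{2}$ according to the number of its $3$-subsets that are triple lines. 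All these rules are phrased purely through membership in $I(D)$ and are invariant under relabelling, so any $\sigma$ with $\sigma\cdot I(D_{1})=I(D_{2})$ automatically matches the dimension and type of every intersection, which is combinatorial equivalence.

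The main obstacle is the concurrency lemma. I would argue by contradiction: if for a $5$-set $A$ the $4$-subsets $A\setminus\{x\}$ and $A\setminus\{y\}$ meet in distinct points $p\neq q$, then the common triple $T=A\setminus\{x,y\}$ contains both $p$ and $q$, hence $\bigcap T$ is a line $\ell$, so $T$ is a triple line. The planes $x,y$ then meet $\ell$ in the distinct points $q,p$ respectively and cannot contain $\ell$ (that would put four planes on a line), so $x\cap\ell=\{q\}$ and $y\cap\ell=\{p\}$. Picking $t\in T$, the quadruple $A\setminus\{t\}$ consists of two planes of $T$ together with $x,y$, whence its intersection is $\ell\cap x\cap y=\{q\}\cap\{p\}=\emptyset$, contradicting $A\setminus\{t\}\in I(D)$. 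Since any two of the five $4$-subsets of $A$ share exactly three planes, this forces all five to meet in one common point. This is precisely the step that uses the condition that no four planes share a line, which is what keeps the offending quadruple empty.
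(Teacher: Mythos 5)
Your proof is correct, and its overall architecture coincides with the paper's: reduce equality of minimal incidence tables to equality of incidence tables after one relabelling, detect triple lines by the criterion that all five completing quadruples are incident, reconstruct the fourfold and fivefold points from the quadruple data, and conclude because the type of a singularity is determined by the number of planes and of triple lines through it. The one genuine divergence is at the concurrency step for fivefold points. The paper disposes of it with a single Helly-type observation: a family of planes in $\PP^{3}$ has a common point iff every four of them do (immediate linear algebra on the defining linear forms --- if every four are linearly dependent, all of them span at most a $3$-dimensional space of forms), and this is valid for arbitrary planes without invoking the octic conditions. You instead prove only the five-plane case you need, by a synthetic contradiction: two $4$-subsets of $A$ meeting at distinct points force the common triple $T$ to be a triple line $\ell$, the planes $x,y$ then meet $\ell$ in two distinct points without containing it, and the remaining quadruple $A\setminus\{t\}$ has empty intersection, contradicting its membership in $I(D)$. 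Your argument is sound --- and it isolates nicely that it is exactly the no-four-planes-through-a-line condition doing the work --- but it is longer and less general than the paper's one-line observation, which applies uniformly to any number of planes. In compensation, you are considerably more explicit than the paper about how the data $l_{2}/l_{3}$, $p_{3}$, $p_{4}^{0},p_{4}^{1}$, $p_{5}^{0},p_{5}^{1},p_{5}^{2}$ are read off from membership in $I(D)$, including the small but necessary point that any triple line through a fourfold or fivefold point must consist of planes passing through that point, so counting triple lines among the $3$-subsets of the point's plane-set is legitimate; the paper compresses all of this into the single sentence that the type is determined by the planes and triple lines through the point.
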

\begin{proof}
Obviously equivalent arrangements have equal minimal incidence
tables, to prove the opposite implication we can assume, that (after
reordering  one of them) the arrangement in question have equal
incidence tables.
 
Observe first,  that a set of planes in $\mathbb P^{3}$ intersect iff
any four of them intersect. Now, let $1\le i<j<k\le8$ be any triple of
indices, that the planes $P_{i}^{1},P_{j}^{1},P_{k}^{1}$ intersect
along a line iff for each $l\in\{1,\dots,8\}\setminus\{i,j,k\}$ the
planes $P_{i}^{1},P_{j}^{1},P_{k}^{1},P_{l}^{1}$ intersects. As the
same holds true for planes $P_{i}^{2}$, we get triple lines, fourfold
and fivefold points in both arrangements given by the corresponding
planes. As the type of a singularity is determined by the number of
planes and triple lines through a point, both arrangements are
combinatorially equivalent.
\end{proof}

An octic arrangement can be described by a $8\times 4$ matrix of
coefficients of linear forms defining the planes, four planes of the
arrangements intersect iff the corresponding $4\times4$ minor is
zero. Consequently the incidence table can be computed from the
coefficient matrix by finding vanishing maximal minors. 

For instance for the Arr. 1. given by
\[xyzt(x+y)(y+z)(z+t)(t+x)=0\]
the matrix is given by 
\[\left[
\begin{array}{cccc}
  1&0&0&0\\
  0&1&0&0\\
  0&0&1&0\\
  0&0&0&1\\
  1&1&0&0\\
  0&1&1&0\\
  0&0&1&1\\
  A&0&0&B
\end{array}\right]
\]
The $70=\binom84$ degree 4 minors (in a lexicographic order) equal
$1$, $0$, $0$, $-1$, $B$, $0$, $-1$, $1$, $0$, $0$, $0$, $0$, $1$, $-B$, $B$, $-1,
1$, $0$, $0$, $0$, $-1$, $B$, $-1$, $B$, $0$, $-1$, $1$, $0$, $1$, $0$, $0$, $-1$, $B,
-B$, $B$, $1$, $0$, $0$, $-A$, $0$, $1$, $-B$, $0$, $0$, $-A$, $1$, $-1$, $0$, $0,
-A$, $A$, $1$, $-B$, $B$, $-A$, $-1$, $0$, $A$, $0$, $A$, $0$, $-1$, $B$, $-A$, $A,
1$, $-A$, $A$, $-A$, $A-B$. Incidence table is the following
list of 24 quadruples of indices\\
{
$1235$, $1236$, $1245$, $1248$, $1256$,
$1257$, $1258$, $1347$, $1348$, $1356$,
$1378$, $1458$, $1468$, $1478$, $2346$,
$2347$, $2356$, $2367$, $2368$, $2458$,
$2467$, $3457$, $3467$, $3478$}.  \\
Although the (sorted) incidence table is independent of the coordinates in
$\PP^{3}$, but it depends on the order of eight planes. We ran through
all the permutations of planes and found that the minimal incidence
tables is \\
$1234$, $1235$, $1236$, $1237$, $1238$,
$1245$, $1267$, $1345$, $1367$, $1456$,
$1457$, $1458$, $1468$, $1568$, $2345$,
$2367$, $2467$, $2468$, $2478$, $2567$,
$2678$, $3468$, $4568$, $4678$.

The \emph{minimizing permutation} is given by the following product of
disjoint cycles $(126834)(57)$ of eight letters.

Observe, that we can try to revert the above considerations. For an
incidence table write down a matrix with generic
coefficients  and compute the appropriate minors
resulting in a system of equations (in 16 variables) describing all
octic arrangements with the given incidence matrix. More precisely we
get an arrangement with incidence table containing the table we
started with, using computer algebra system we check if (and which)
remaining minors belongs to the ideal generated by the assigned ones. 

The disadvantage
of this direct approach is that we  get a very complicated
system of equations that would be very difficult to handle. To simplify
the computations we can identify a rigid subarrangement of five or six
planes. Many of the considered
arrangements contain a rigid subarrangement of six faces of a
cube (which reduces the number of
parameters to 8 and the degrees of the equations to 2). In remaining
cases we identify a generic subarrangement of five planes (12
parameters, degree of equations at most 3).

\section{Special elements}

In every non--trivial one parameter family of Calabi--Yau threefolds
there are special elements which are not smooth. In the case of a
double octic Calabi--Yau threefolds they corresponds to arrangements
with altered types (or numbers) of singularities. Equivalently they
are the arrangements with bigger incidence table, consequently they
are described by vanishing of all the non--zero minors of the
coefficients matrix. 

We can encounter two different situation
\begin{itemize}
\item special arrangement does not satisfy the definition of an octic
  arrangement (i.e. it does not obey the restrictions we put on
  the intersections --- two planes equals, four planes intersect in a
  line or six planes intersect), then the special element doe-snot
  admit a Calabi--Yau resolution.
\item special arrangement is an octic arrangement of different type
  (non--equivalent). The blow--ups we perform to resolve a generic
  element of the family are not enough to resolve them, special
  element of the family is again singular but this time it admits a
  Calabi--Yau resolution (Calabi--Yau variety). 
\end{itemize}

In \cite{CvS} a conifold expansion was used to compute the
Picard--Fuchs operator for families in which four planes in general
position degenerate to four intersecting planes without any further
degenerations -- geometrically it corresponds to  a shrinking
tetrahedron. This kind of degenerations occur when the incidence table
increases by a one quadruple. There can be more that one shrinking
tetrahedron, in this case there are more then one new incidence, but
any two of them have at most two common incidences.
   
\section{Automorphisms, elliptic fibrations}

Arithmetic properties of Calabi--Yau threefolds (f.i. in the context of
Modularity  Conjecture -- see  \cite{Meyer} for details) depend on
special geometric properties, we shall study  Kummer surface fibrations 
and actions of finite groups.

If the eight planes split into two
quadruples of intersecting planes (two opposite fourfold points),
then each quadruple produces an elliptic surface, the
double octic Calabi--Yau has a double cover by an fiber product of
rational elliptic surfaces. 
The double octic is a Kummer surface fibration for the
two elliptic fibrations. 
Consider an arrangement $D=P_{1}\cup\dots\cup P_{8}$ of eight planes given by equations
\[
P_{i}=\{f_{i}=0\}, i=1,\dots,8
\]
and assume that among the eight planes there are two disjoint quadruples
intersecting in a point each. After renumbering the equations and changing coordinates
we can assume that the planes $P_{1},\dots,P_{4}$ intersect at the
point $A=(0,1,0,0)$
whereas $P_{5},\dots,P_{8}$ intersect at $B=(1,0,0,0)$. 
Equivalently, equations $f_{1},\dots,f_{4}$ depend only on $x,z,t$, 
$f_{5},\dots,f_{8}$ depend only on $y,z,t$.
Equations $f_{1},\dots,f_{4}$ (resp. $f_{5},\dots,f_{8}$) define four
lines in the projective plane $\PP^{2}_{x,z,t}$
(resp. $\PP^{2}_{y,z,t}$), that we can identify with projection from
$A$ and $B$ respectively.

Let $S$ and $S'$ be the double coverings of $\PP^2$ branched along the
corresponding sums of four lines. Then in appropriate affine
coordinates (f.i., $t=1$) they can be written as follows:
\begin{align*}
S & = \{(x,z,u)\in \mathbb C^{3}:u^{2}=f_{1}(x,z,1)\cdot\ldots\cdot f_{4}(x,z,1)\}\\
S' & = \{(y,z,v)\in \mathbb C^{3}:v^{2}=f_{5}(y,z,1)\cdot\ldots\cdot f_{8}(y,z,1)\}
\end{align*}

This exhibits (birationally) both surfaces as elliptic fibrations. Moreover,
the map
\[
((x,y,u),(y,t,v)) \mapsto (x,y,t,uv)
\]
is a rational, generically $2:1$ map from their fiber product to
the double covering of $\PP^3$ branched along the octic surface $D$.

Singular fibers of the elliptic surfaces $S$ ($S'$) correspond to the
projection of the six lines $l_{ij}=P_{i}\cap P_{j}$ with $1\le i<j\le
4$ ($m_{ij}=P_{i+4}\cap P_{j+4}$) from the line $AB$, we call the
lines $l_{ij}$ and $l_{jk}$ (resp. $m_{ij}$ and $m_{jk}$)
\emph{conjugate} if $\{i,j,k,l\}=\{1,2,3,4\}$. 
The type of a singular fiber is determined by the number of lines that project to the same
point
\begin{itemize}
\item [$I_{2}$] in the case of one line,
\item [$I_{4}$] -- two lines,  
\item [$I_{o}^{*}$] -- three lines,  
\item [$I_{2}^{*}$] -- four lines.  
\end{itemize}
Consequently, the fibers of type $I_{0}^{*}$ and $I_{2}^{*}$ can be
recognized from the incidence table. A fiber of the type $I_{0}$ can
occur in two different situation, three of the planes
$P_{1},\dots,P_{4}$ intersect along a (triple) line (so the three
lines actually coincide), one of the planes $P_{1},\dots,P_{4}$ pass
through the (fivefold) point $B$. A fiber of the type $I_{2}^{*}$
corresponds to three planes (out of $P_{1},\dots,P_{4}$) intersecting
along a line, and one of them passing through the point $B$. 
Consequently we can get surfaces with the following sequences of
singular fibers
\[\begin{array}{rlrlrl}
S_{1}:&(I_{0}^{*},I_{0}^{*}),
&S_{2}:&(I_{4},I_{4},I_{2},I_{2}), 
&S_{3}:&(I_{0}^{*},I_{2},I_{2},I_{2}),\\
S_{4}:&(I_{2}^{*},I_{2},I_{2}),\quad 
&S_{5}:&(I_{4},I_{2},I_{2},I_{2},I_{2}),\quad 
&S_{6}:&(I_{2},I_{2},I_{2},I_{2},I_{2},I_{2}).\quad 
\end{array}
\]

In table~\ref{tab:es} we give examples of explicit equations for
the branch locus of corresponding double quartic elliptic fibrations
and types and coordinates of the singular fibers.
\begin{table}
\[
\def\arraystretch{1.5}
\def\arraycolsep{2mm}
\begin{array}[t]{|c|cccccc|c|}
 \hline
\def\multicolumn#1#2{}
 \quad S_{1} \quad&I_{0}^{*}&I_{0}^{*}&&&&&{xz(x+t)(x+2t)}\\
 \cline{2-7}
 &\infty&0&&&&&\\
\hline
 S_{2}&I_{4}&I_{4}&I_{2}&I_{2}&&&{x(x+t)(x+z)(x+z+t)}\\
 \cline{2-7}
 &\infty&0&1&-1&&&\\
\hline
 S_{3}&I_{0}^{*}&I_{2}&I_{2}&I_{2}&&&{x(x+t)(x+\lambda t)(x+z)}\\
 \cline{2-7}
 &\infty&0&1&\lambda&&&{t(x+\lambda z)(x+z)(x+\lambda t)}\\
\hline
 S_{4}&I_{2}^{*}&I_{2}&I_{2}&&&&{xt(x+z)(x+t)}\\
 \cline{2-7}
 &\infty&0&1&&&&\\
\hline
S_{5}&I_{4}&I_{2}&I_{2}&I_{2}&I_{2}&&{x(x+t)(x+z-\lambda
  t)(x+z)}\\
 \cline{2-7}
 &\infty&0&1&\lambda&\lambda+1&&\\
\hline
S_{6}&I_{2}&I_{2}&I_{2}&I_{2}&I_{2}&I_{2}&{x(x+t)(x+z)(x+\frac{1}{\mu-\lambda}
  (z-\lambda t))}\rule[-3mm]{0pt}{8mm}\\
 \cline{2-7}
 &\infty&0&1&\lambda&\mu&\frac{\lambda}{\lambda-\mu+1}&
  x(x+t)(\lambda x+z)((\mu-1)x+z-t)\rule[-3mm]{0pt}{8mm}\\
\hline
\end{array}
\]
\caption{Elliptic fibrations}
\label{tab:es}
\end{table}
Fibration $S_{1}$ is special as it has only two singular fibers. For
the remaining cases we specialize three of singular fibers to
$\infty$, $0$ and $1$. Surfaces $S_{2}$ and $S_{4}$ are uniquely fixed by the position of
singular fibers. Singular fibers of $S_{2}$  form a harmonic
quadrilateral, there is an isogeny of this 
elliptic surface 
that exchanges $I_{2}$ fibers  with $I_{4}$ fibers (yielding some correspondences
between certain double octics).
For any fixed position of singular fibers there exists two non--equivalent
fibrations of type $S_{3}$ corresponding to the two described above
geometric realization. The following transformation
\[(u,x,z,t)\mapsto (\lambda ztu,\lambda zt,xz,xt)\]
gives a birational transformation of the two elliptic surface given in
the table, and consequently birational transformations of some double octics coming
from non-equivalent octic arrangements.\label{sec:birat}
A surface of type $S_{5}$ has one $I_{4}$ fiber and two pairs of
conjugate $I_{2}$ singularities, for any symmetric position of
singular fibers there exists a unique surface of type $S_{5}$. 

Surface of type $S_{6}$ has three pairs of conjugate $I_{2}$
singularities symmetric with respect to some involutive
automorphism of the projective line . For any symmetric position of
six singular fibers there 
are two surfaces of type $S_{6}$, geometrically we can say that they
are keeping track of the order of the conjugate pairs, we can change
order of any two of them. If we specialize three pairwise
non--conjugate singular fibers at $\infty$, $0$ and $1$ we have two
options: corresponding three lines $l_{ij}$ are -- up to permutation
-- $l_{12}, l_{13}, l_{23}$ or $l_{12},l_{1,3},l_{1,4}$ leading to the
two equations in the table. These two surfaces are birational and the
birational map can be given by 
\[(u,x,z,t)\longmapsto\left(u(t-z)(\lambda
  t-z)\sqrt{\tfrac{\lambda-\mu}{\lambda(\mu-1)}},
  (t-z)x,(\lambda x-x+\lambda t-z)z,(\lambda x-x+\lambda t-z)t\right)
\]

Fiber product of rational elliptic surfaces (hence also the Kummer
fibration) is determined by the matching of singular fibers, singular
fibers given by the lines $l_{ij}$ and $m_{kl}$ are mapped to the same
point in $\PP^{1}$ iff the lines $l_{ij}$ and $m_{kl}$ intersects
i.e. the corresponding four planes intersect in a point (the
appropriate quadruple belongs to the incidence table).

For every considered octic arrangement we determine permutations in
$S_{8}$ which leave the incidence table invariant, using MAGMA
(and Gap's SmallGroups Library) we classify the group of symmetries
and small system of generators. Using this data one can
determine which symmetries lift to projective transformation of
$\PP^{3}$ and the double octic.

\section{Algorithm}

Any incidence table determines two additional lists: the list of
triplets of planes intersecting along a (triple) line and the list of
quintuples of planes intersecting at a (fivefold) point. A triplet
belongs to the first list iff all quadruples containing it belongs to
the incidence table, similarly a quintuple belongs to the second list
if all quadruples contained in it belongs to the incidence table. 

If two quadruples belonging to the incidence table have three common
interface then their intersection belongs to the list of triplets or
their sum belongs to the list of quintuples (or both). Finally neither
two triplets can have two common entries nor two quintuples -- four
common entries. Finally the lists of triplets and quintuples have at
most 4 elements each (an arrangement has at most four triple lines and
at most four fivefold points).  

Our strategy is to operate on a list which entries consist of three
list: list of triplets, quadruples and quintuples.
We start with the  list with one entry: the list of quadruples contains one
element $1234$ (the smallest possible), the lists of triplets
and quintuples are empty. Then we repeat the following steps on each
entry of the list

\begin{itemize}
\item introduce new element in the list by adding a single quadruple
  (that is not yet on a list of quadruples),
\item if two quadruples in the list have three common entries we
  replace corresponding entry with two new ones by adding the
  intersection to the triplets list or the sum to the quintuples list
\item it two triplets have two or quintuples -- four common entries,
  we remove the corresponding element from the list
\item for each triplets we add all quadruples containing it and for
  each quintuple -- all quadruples contained in it
\item we apply all permutations of eight digits to the triplets,
  quadruples and quintuples list and replace the entry with minimal
  list of quadruples.
\end{itemize}

We repeated these steps until the list stabilized. Then we applied the
inverse procedure to the entries which were obtained after first seven
steps, which is a necessary condition for $h^{1,2}\le1$. For each such
entry we determined a quintuples of planes in general position in the
arrangement (i.e. a quintuple of eight digits which does not contain a
quadruple from the incidence table). Then we assumed that these five
planes have equations $x,y,z,t,x+y+z+t$, whereas the remaining three
planes have equations $A_{i}x+B_{i}y+C_{i}x+D_{i}t$ and computed the
ideal generated by the minors of entries of the coefficients matrix
corresponding to the entries in the incidence table. Associated primes
of this ideal that do not impose any extra elements in the incidence
table give us the requested octic arrangements.

Finally we computed the minimal incidence tables for the examples from
\cite{Meyer} and recognize them in the produced list.

\section{Results}

In this section we collect the results of Magma computations, for the
sake of completeness we also include the data of the eleven rigid
arrangements listed in \cite{Meyer} and three more that we shall
denote as A, B and C. Observe that these three examples are not
defined over $\QQ$ so they cannot be found in \cite{Meyer}, however
they appeared in \cite{CvS, CvS2}. Our computations produced only one new
example which we denote by D, a family defined over $\QQ[\sqrt{-3}]$,
again this example could not appear in the list of Meyer. Finally,
there is only one rigid double octic Calabi--Yau threefold in positive
characteristic, that cannot be lifted to characteristic zero, it is
the non--liftable example in characteristic 3 in \cite{CvS3}.
\begin{prop}
  The lists of  double octic Calabi--Yau threefolds with $h^{1,2}\le1$
  in \cite{Meyer} defined over $\QQ$ is  complete, there exist four
  examples that cannot be realized over $\QQ$, they have the following
  Hodge numbers:
  \begin{eqnarray*}
    &&A: h^{1,1}=46, h^{1,2}=0\\
    &&B: h^{1,1}=38, h^{1,2}=0\\
    &&C: h^{1,1}=38, h^{1,2}=0\\
    &&D: h^{1,1}=36, h^{1,2}=1
  \end{eqnarray*}
\end{prop}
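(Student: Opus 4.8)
The plan is to leverage the combinatorial classification machinery developed in the preceding sections—the incidence table, the algorithm, and the two relations on singularity counts—to exhaustively enumerate all combinatorial equivalence types that can produce a double octic with $h^{1,2}\le 1$, and then to match each realizable type against Meyer's list while isolating the exceptions $A,B,C,D$. I would proceed as follows. First, I would run the algorithm of Section~5 to stabilization: starting from the single quadruple $1234$, repeatedly adding quadruples and closing up under the triplet/quintuple consistency conditions (three common entries forcing a triple line or fivefold point, the exclusion conditions on pairs of triplets and quintuples, and the bound of at most four triple lines and four fivefold points). This produces a finite list of candidate minimal incidence tables, since every step either adds a bounded combinatorial feature or is pruned. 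The stabilization itself, together with the minimization over all $8!$ permutations at each step, is what guarantees that no combinatorial type is missed.

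Next, for each candidate incidence table that survives the necessary condition (obtained after the first seven elementary steps, as indicated in Section~5), I would apply the inverse procedure: fix a generic quintuple of planes as $x,y,z,t,x+y+z+t$, parametrize the remaining three planes by $A_ix+B_iy+C_iz+D_it$, and form the ideal generated by the $4\times 4$ minors prescribed by the incidence table. Computing the associated primes of this ideal in Magma yields the actual octic arrangements realizing that combinatorial type; I would discard those associated primes that force extra incidences (hence a different, larger table) or that violate the octic-arrangement restrictions, keeping exactly the components that realize the prescribed table without degeneration. For each surviving component I would compute $h^{1,2}$ via the equisingular ideal (cf.\ the method of \cite{CvS}) and verify $h^{1,2}\le 1$, simultaneously reading off $h^{1,1}$ from the Euler characteristic formula $e(X)=40+4p_4^0+3p_4^1+16p_5^0+18p_5^1+20p_5^2+l_3$ together with the singularity counts encoded in the table.

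The arithmetic heart of the statement is the question of realizability over $\QQ$. For each realizing component I would examine its field of definition: the associated prime, being an ideal over $\QQ$ in the coefficient variables, has a residue field that is either $\QQ$ or a genuine extension, and a $\QQ$-rational point on the corresponding variety exists precisely when the component admits a smooth $\QQ$-point outside the bad locus. For the four exceptional examples I expect the associated primes to have no $\QQ$-rational realization: $A,B,C$ should emerge as components defined over quadratic or cubic fields with no rational model, and $D$ as a one-parameter family whose defining ideal is only definable over $\QQ[\sqrt{-3}]$. I would then cross-check by computing the minimal incidence tables of all $74$ Meyer arrangements and confirming that they account for exactly the $\QQ$-realizable components, leaving $A,B,C,D$ as the only combinatorial types realized over an extension field but not over $\QQ$.

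The main obstacle will be the field-of-definition analysis rather than the combinatorial enumeration. Proving that $A,B,C,D$ cannot be realized over $\QQ$—as opposed to merely noting that the sample equations we found involve $\sqrt{-3}$ or other irrationalities—requires showing that no choice of $\QQ$-coefficients yields the prescribed incidence table without extra degenerations; equivalently, that the relevant component of the minor ideal has no smooth $\QQ$-point in the admissible locus. This is where a careful descent or direct obstruction argument on each of the four associated primes is needed, and it is the step I expect to consume the most effort, since the preceding combinatorial and deformation-theoretic steps are essentially bounded finite computations once the algorithm terminates.
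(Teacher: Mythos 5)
Your proposal follows essentially the same route as the paper: the Section~5 algorithm enumerating candidate minimal incidence tables, the inverse procedure computing associated primes of the ideal of prescribed $4\times4$ minors (after normalizing a generic quintuple to $x,y,z,t,x+y+z+t$), discarding primes that impose extra incidences, computing $h^{1,2}$ via the equisingular ideal and $h^{1,1}$ via the Euler characteristic formula, and finally matching against the recomputed minimal incidence tables of Meyer's $74$ arrangements. The one place you diverge --- anticipating a ``careful descent or direct obstruction argument'' for non-realizability over $\QQ$ --- is unnecessary in the paper's setup: since the minor ideal is defined over $\QQ$, its associated primes computed over $\QQ$ settle the question directly, as $A$, $B$, $C$ arise as rigid (zero-dimensional modulo projective equivalence) components whose residue fields are the quadratic extensions $\QQ(\sqrt{-3})$ and $\QQ(\sqrt{5})$ (hence admit no $\QQ$-point at all), while $D$ is a one-parameter family definable only over $\QQ[\sqrt{-3}]$, so non-realizability over $\QQ$ is read off the primary decomposition itself rather than proved by a separate arithmetic argument.
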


Most of the data are self explaining, we do not include the
(original) incidence tables as they can be easily computed.
We also do not include the full symmetry groups, instead we give an
isomorphism type and a small set of generators. In fact we get 16
isomorphism types, most of them can be represented as direct products
of cyclic, dihedral and symmetric groups, three biggest and most
complicated we denoted by $g_{32,43}$, $g_{64,138}$ and $g_{192,955}$
(notation follows the SmallGroups library from GAP). 
These groups can be described as:

\begin{itemize}\leftskip=6mm
\item[\rule{3mm}{0mm}$g_{32,43}$:\ ] is the holomorph of the cyclic group
  $C_{8}$, i.e. $\operatorname{Aut}(C_{8})\rtimes C_{8}$;
\item[\rule{1.5mm}{0mm}$g_{64,138}$:\ ] is the unitriangular matrix group
 $UT(4,2)$  of degree four over the field of two elements. 
\item[$g_{192,955}$:\ ] is a semidirect product $C_{2}^{\oplus4}\rtimes
  D_{6}$ and is isomorphic to the automorphism group
  $\operatorname{Aut}(C_{2}\times Q_{8})$
\end{itemize}

Some arrangements have several pairs of opposite fourfold points and
so also corresponding fiber products of rational elliptic surfaces,
we list only one example for each isomorphisms class. 
In case of surfaces of types $S_{5}$ and $S_{6}$ we marked pairs of conjugate
singular fibers. 

\vspace*{0mm plus 6mm}

\subsection{Rigid arrangements}

\def\wl#1{\hbox to 10mm{\hfill$\ifcase#1\or I_{2}\or I_{4}\or I_{0}^{*}\or I_{2}^{*}\fi$\hfill}}

\long\def\AAA#1

#2

#3

#4

#5

#6

#7

#8

#9

{\vspace{2mm plus 2mm}{\small
\noindent
\textbf{Arr. No. #1}:
\(\displaystyle #2\)

\nopagebreak
\noindent\textbf{Minimal incidences: }
#3

\nopagebreak
\noindent
\textbf{Minimizing permutation: }$#4$

\nopagebreak
\ifthenelse{\equal{#5}{1,1}}\relax
{\noindent {\textbf{Symmetries:}} $#5,\quad \langle 
#6
\rangle$}

\vbox{\nopagebreak
{\noindent {\textbf{Singular points:}\par} 
#9
}\\[-2mm]}

\nopagebreak
\ifx\\#8\\\relax\else{\noindent{\textbf{Elliptic fibrations:}}
#8} 
\fi\par\noindent
}
\vspace*{-1mm}
}

\AAA 1

xyzt \left( x+y \right)  \left( y+z \right)  \left( z+t \right) 
 \left( x+t \right) 

$1234$, $1235$, $1236$, $1237$, $1238$,
$1245$, $1267$, $1345$, $1367$, $1456$,
$1457$, $1458$, $1468$, $1568$, $2345$,
$2367$, $2467$, $2468$, $2478$, $2567$,
$2678$, $3468$, $3578$, $4568$, $4678$

(126834)(57)

D_4

{(1432)(5876),(12)(34)(68)}

{\hbox{1256 -- 3478,}\;\;\hbox{1258 -- 3467,}\;\;\hbox{1458 -- 2367,}\;\;\hbox{1478 -- 2356,}\;\;}

\qquad\(\displaystyle
\begin{array}[t]{ccc}
  \infty&0&1\\\hline\wl4 & \wl1 &\wl1 \\ \wl1 &\wl4&\wl1
\end{array}
\)\\

$p^0_4:$  
5678

$p^1_4:$  
1275, 1468, 2386, 3457

$p^2_5:$  
12356, 12458, 13478, 23467

$\ l_3:$
125, 148, 236, 347

\AAA 3

xyzt \left( x+y \right)  \left( y+z \right)  \left( y-t \right) 
 \left( x-y-z+t \right) 

$1234$, $1235$, $1236$, $1237$, $1238$, $1245$, $1267$, $1345$,
$1367$, $1456$, $1457$, $1458$, $1467$, $1567$, $1678$, $2345$,
$2367$, $2468$, $2578$, $3478$, $4567$ 

(172)(3564)

S_3

{(176)(354),(17)(45)}

{\hbox{1258 -- 3467,}\;\;\hbox{1356 -- 2478,}\;\;\hbox{1457 -- 2368,}\;\;}

\qquad\(\displaystyle
\begin{array}[t]{cccc}
  \infty&0&1&2\\\hline\wl4 &-& \wl1 &\wl1 \\ \wl2 &\wl1&\wl2&\wl1
\end{array}
\)\\

$p^0_4:$  
1378, 1468, 5678

$p^1_4:$  
1285, 2386, 2487

$p^2_5:$  
12356, 12457, 23467

$\ l_3:$
125, 236, 247

\AAA 19

xyzt \left( x+y \right)  \left( y+z \right)  \left( x-z-t \right) 
 \left( x+y+z-t \right) 

$1234$, $1235$, $1236$, $1237$, $1238$,
$1245$, $1267$, $1345$, $1367$, $1456$,
$1457$, $1458$, $2345$, $2367$, $2468$,
$2578$, $3478$, $3568$,  

(1548632)

C_2\oplus C_2

{(15)(78),(15)(36)}

{\hbox{1245 -- 3678,}\;\;}

\qquad\(\displaystyle
\begin{array}[t]{cccc}
  \infty&0&1&1/2\\\hline\wl4 &\wl1& \wl1 &- \\ \wl1 &\wl2&\wl2&\wl1
\end{array}
\)\\

$p^0_4:$  
1347, 1468, 3458, 4567

$p^1_4:$  
1245, 1275, 1285, 2346

$p^1_5:$  
23768

$p^2_5:$  
12356

$\ l_3:$  
125, 236

\AAA 32

xyzt \left( x+y \right)  \left( y+z \right)  \left( x-y-z-t \right) 
 \left( x+y-z+t \right) 

$1234$, $1235$, $1236$, $1237$, $1238$,
$1245$, $1345$, $1456$, $1457$, $1458$,
$1678$, $2345$, $2467$, $2568$, $3468$,
$3578$,  

(15478632)

C_2\oplus C_2

{(13)(56)(78),(16)(35)}

{\hbox{1356 -- 2478,}\;\;}

\qquad\(\displaystyle
\begin{array}[t]{cccc}
  \infty&0&1&-1\\\hline\wl2 &\wl2& \wl1 &\wl1 \\ \wl3 &\wl1&\wl1&\wl1
\end{array}
\)\\

$p^0_4:$  
1378, 1467, 2478, 3458, 5678

$p^1_4:$  
1245, 1275, 1285, 2346, 2376, 2386

$p^2_5:$  
12356

$\ l_3:$  
125, 236

\AAA 69

xyzt \left( x+y \right)  \left( x-y+z \right)  \left( x-y-t \right) 
 \left( x+y-z-t \right) 

$1234$, $1235$, $1236$, $1237$, $1238$, $1245$, $1267$, $1345$, $1367$, $1468$, $1578$, $2345$, $2367$, $2478$, $3568$, $4567$

(13745)

C_2\oplus C_2

{(12)(34)(67),(12)(36)(47)}

{\hbox{1258 -- 3467,}\;\;}

\qquad\(\displaystyle
\begin{array}[t]{cccc}
  \infty&0&1&-1\\\hline\wl2 &\wl2& \wl1 &\wl1 \\ \wl3 &\wl1&\wl1&\wl1
\end{array}
\)\\

$p^0_4:$  
1468, 2378, 3458, 3467, 5678

$p^1_4:$  
1285

$p^1_5:$  
12356, 12457

$\ l_3:$  
125

\AAA 93

xyzt \left( x+y \right)  \left( x-y+z \right)  \left( y-z-t \right) 
 \left( x+z-t \right) 

$1234$, $1235$, $1236$, $1237$, $1238$, $1245$, $1345$, $1467$, $1568$, $2345$, $2468$, $2567$, $3678$, $4578$ 

(12)(35)(46)(78)

C_2\oplus C_2

{(12)(36),(12)(78)}

{\hbox{1236 -- 4578,}\;\;\hbox{1245 -- 3678,}\;\;}

\qquad\(\displaystyle
\begin{array}[t]{cccc}
  \infty&0&1&1/2\\\hline\wl1 &\wl2& \wl2 &\wl1 \\ \wl3 &\wl1&\wl1&\wl1
\end{array}
\)\\

$p^0_4:$  
1348, 1467, 2347, 2468, 3678, 4578

$p^1_4:$  
1245, 1275, 1285

$p^1_5:$  
12356

$\ l_3:$  
125

\AAA 238

xyzt \left( x+y+z-t \right)  \left( x+y-z+t \right)  \left( x-y+z+t
 \right)  \left( -x+y+z+t \right) 

$1234$, $1256$, $1278$, $1357$, $1368$, $1458$, $2367$, $2457$, $2468$, $3456$, $3478$, $5678$ 

(172)(346)

G_{192,955}

{(23)(5687),(1835)(2746)}

{\hbox{1256 -- 3478,}\;\; \hbox{1278 -- 3456,}\;\; \hbox{1357 --
    2468,}\;\; \hbox{1368 -- 2457,}\;\; \hbox{1458 -- 2367,}\;\; \hbox{1467 -- 2358,}\;\;}

\qquad\(\displaystyle
\begin{array}[t]{cccc}
  \infty&0&1&-1\\\hline\wl1 &\wl1& \wl2 &\wl2 \\ \wl1 &\wl1&\wl2&\wl2
\end{array}
\)\\

$p^0_4:$  
1256, 1278, 1357, 1368, 1458, 1467, 2358, 2367, 2457, 2468, 3456, 3478

\AAA 239

xyzt \left( x+y+z \right)  \left( x+y+t \right)  \left( x+z+t \right) 
 \left( y+z+t \right) 

$1234, 1256, 1278, 1357, 1368, 1458, 2358, 2367, 2457, 3456$ 

(13)(45)(68)

S_4

{(23)(67),(1234)(5876)}

{\hbox{1278 -- 3456,}\;\;\hbox{1368 -- 2457,}\;\;\hbox{1458 -- 2367,}\;\;}

\qquad\(\displaystyle
\begin{array}[t]{ccccc}
  \infty&0&-1/2&-1&-2\\\hline\wl1 &\wl2& \wl1 &\wl2&- \\ \wl2 &\wl1&-&\wl2&\wl1
\end{array}
\)\\

$p^0_4:$  
1235, 1246, 1278, 1347, 1368, 1458, 2348, 2367, 2457, 3456

\AAA 240

xyzt \left( x+y+z \right)  \left( x+y-z+t \right)  \left( x-y+z+t
 \right)  \left( x-y-z-t \right) 

$1234$, $1256$, $1278$, $1357$, $1368$, $1458$, $2367$, $2457$, $3456$, $4678$ 

(14)(27)(36)(58)

S_3

{(12)(78),(123)(687)}

{\hbox{1278 -- 3456,}\;\;\hbox{1368 -- 2457,}\;\;\hbox{1458 -- 2367,}\;\;}

\qquad\(\displaystyle
\begin{array}[t]{ccccc}
  \infty&0&1&-1&-3\\\hline\wl1 &\wl1& \wl2 &\wl2&- \\ \wl1
        &\wl1&\wl1&\wl2&\wl1\\[-2mm]
&\multicolumn{4}{c}{\rule{.5pt}{2mm}\rule{4cm}{.5pt}\rule{.5pt}{2mm}}\\[-3mm]
\multicolumn{3}{c}{\rule{.5pt}{3mm}\rule{2.8cm}{.5pt}\rule{.5pt}{3mm}}
\end{array}
\)\\

$p^0_4:$  
1235, 1278, 1368, 1458, 1467, 2367, 2457, 2468, 3456, 3478

\AAA 241

xyzt \left( x+y+z+t \right)  \left( x+y-z-t \right)  \left( y-z+t
 \right)  \left( x+z-t \right) 

$1234, 1256, 1278, 1357, 1468, 2358, 2467, 3456, 3678, 4578$

(13267584)

G_{32,43}

{(34)(56),(17)(36)(45),(14857326),(28)(36)(45)}

{\hbox{1278 -- 3456,}\;\;}

\qquad\(\displaystyle
\begin{array}[t]{cccc}
  \infty&0&1&-1\\\hline\wl2 &\wl2& \wl1 &\wl1 \\ \wl1 &\wl1&\wl2&\wl2
\end{array}
\)\\

$p^0_4:$  
1256, 1278, 1348, 1357, 1467, 2347, 2368, 2458, 3456, 5678

\AAA 245

xyzt \left( x+y+z \right)  \left( y+z+t \right)  \left( x-y-t \right) 
 \left( x-y+z+t \right) 

$1234, 1256, 1278, 1357, 1368, 1458, 2358, 2367, 4567$

(1362)(458)

C_2

{(16)(37)}

{\hbox{1367 -- 2458,}\;\;}

\qquad\(\displaystyle
\begin{array}[t]{ccccc}
  \infty&0&1&-1&-3\\\hline\wl2 &\wl2& \wl1 &\wl1&- \\ \wl1
        &\wl1&\wl1&\wl2&\wl1\\[-2mm]
&\multicolumn{4}{c}{\rule{.5pt}{2mm}\rule{4cm}{.5pt}\rule{.5pt}{2mm}}\\[-3mm]
\multicolumn{3}{c}{\rule{.5pt}{3mm}\rule{2.8cm}{.5pt}\rule{.5pt}{3mm}}
\end{array}
\)\\

$p^0_4:$  
1235, 1247, 1268, 1367, 1456, 2346, 2458, 2567, 3478

\AAA A

xyzt \left( x+y \right)  \left( x+y+z-t \right) \times\\\rule{2cm}{0cm} \times \left(  \left( \sqrt{-3}-1
 \right) x-2\,y+ \left( \sqrt{-3}-1 \right) z \right)\left( 2\,y+ \left( -\sqrt{-3}+
1 \right) z-2\,t \right) 

$1234$, $1235$, $1236$, $1237$, $1238$, $1245$, $1345$, $1467$, $1568$, $2345$, $2468$, $2578$, $3478$, $3567$ 

(13468752)

S_3

{(12)(37)(48),(25)(37)(46)}

{}

{}

$p^0_4:$  
1368, 1478, 2348, 2467, 3456, 5678

$p^1_4:$  
1245, 1265, 1285

$p^1_5:$  
12357

$\ l_3:$  
125

\AAA B

xyzt \left( x+y+z \right)  \left( x+z-t \right)  \left(  \left( \sqrt{-3}-1
 \right) x+ \left( \sqrt{-3}+1 \right) y-2\,z+2\,t \right) \\\rule{5cm}{0mm}\times \left(  \left( \sqrt{-3}-1
 \right) x+ \left( \sqrt{-3}-1 \right) y-2\,z+ \left( \sqrt{-3}+1 \right) t \right) 

$1234, 1256, 1278, 1357, 1368, 2358, 2457, 3456, 4678$ 

(13528476)

D_4

{(28)(36),(16)(27)(35)(48)}

{\hbox{1267 -- 3458,}\;\;\hbox{1378 -- 2456,}\;\;}

\[\def\arraystretch{1.2}
\begin {array}{ccccc} 0&1&\infty& \tfrac12-\tfrac{\sqrt{-3}}2&
-\tfrac12-\tfrac{\sqrt{-3}}2\\[2mm] 
\hline  \wl1&\wl1&\wl2&\wl1&\wl1\\ [-2mm]
\multicolumn{4}{c}{\rule{.5pt}{2mm}\rule{4.2cm}{.5pt}\rule{.5pt}{2mm}\rule{2mm}{0mm}}\\[-4mm]
&\multicolumn{4}{c}{\rule{.5pt}{3mm}\rule{4.8cm}{.5pt}\rule{.5pt}{3mm}\rule{4mm}{0mm}}\\[-1mm]
\wl2&\wl1&\wl1&\wl1&\wl1\\ [-2mm]
&&\multicolumn{2}{c}{\rule{.5pt}{2mm}\rule{1.6cm}{.5pt}\rule{.5pt}{2mm}\rule{2mm}{0mm}}\\[-4mm]
&\multicolumn{4}{c}{\rule{.5pt}{3mm}\rule{4.8cm}{.5pt}\rule{.5pt}{3mm}\rule{4mm}{0mm}}\\
\end {array} 
\]

$p^0_4:$  
1235, 1267, 1346, 1378, 1568, 2456, 2478, 3458, 3567

\AAA C

xyzt \left( x+y+z \right)  \left(  \left( \sqrt5-1 \right) y-2\,z+2\,t
 \right)  \left( 2\,x+2\,y+ \left( \sqrt5-1 \right) t \right)\\\rule{5cm}{0mm}\times  \left( 
 \left( -\sqrt5+3 \right) x+2\,y+ \left( -\sqrt5+1 \right) z+ \left( \sqrt5-1
 \right) t \right) 

$1234, 1256, 1278, 1357, 1368, 2358, 2457, 3467, 4568$ 

(125387)(46)

C_2

{(17)(25)(38)(46)}

{\hbox{1268 -- 3457,}\;\;}

\[\def\arraystretch{1.2}
\begin {array}{ccccc} 0&1&\infty& -\tfrac12-\tfrac{\sqrt{5}}2&
                                                              \tfrac12-\tfrac{\sqrt{5}}2\\[2mm] 
\hline  \wl1&\wl1&\wl1&\wl1&\wl2\\ [-2mm]
&\multicolumn{2}{c}{\rule{.5pt}{2mm}\rule{1.4cm}{.5pt}\rule{.5pt}{2mm}\rule{2mm}{0mm}}\\[-4mm]
\multicolumn{4}{c}{\rule{.5pt}{3mm}\rule{4.cm}{.5pt}\rule{.5pt}{3mm}}\\[-1mm]
\wl1&\wl1&\wl2&\wl1&\wl1\\ [-2mm]
&\multicolumn{3}{c}{\rule{.5pt}{2mm}\rule{3cm}{.5pt}\rule{.5pt}{2mm}\rule{2mm}{0mm}}\\[-4mm]
\multicolumn{5}{c}{\rule{.5pt}{3mm}\rule{5.8cm}{.5pt}\rule{.5pt}{3mm}}\\
\end {array} 
\]

$p^0_4:$  
1235, 1247, 1268, 1378, 1567, 2346, 2578, 3457, 4568

\subsection{Non--rigid arrangements}
\rule{0mm}{0mm}

\def\nc{\  non--CY}
\def\arr{~\;~Arr.\,}
\def\inf{$\infty$}
\def\qdd{\rule{5mm}{0mm}}

\long\def\AAA#1

#2

#3

#4

#5

#6

#7

#8

#9

{\vspace{2mm plus 2mm minus 2mm}\vbox{{\small
\noindent
\textbf{Arr. No. #1}:
\(\displaystyle #2\)

\nopagebreak
\noindent\textbf{Minimal incidences: }
#3

\nopagebreak
\noindent
\textbf{Minimilizing permutation: }$#4$

\nopagebreak
\ifthenelse{\equal{#5}{1,1}}\relax
{\noindent \textbf{Symmetries: } $#5,\quad \langle 
#6
\rangle$}

\nopagebreak
\noindent\textbf{Special values: }#8
\par\noindent
\vspace*{-5mm}
}}

\nopagebreak
{\noindent {\textbf{Singular points:}\par} 
#9
}\par\noindent
}

\long\def\ELLF#1

{\nopagebreak
\ifx\\#1\\\relax\else{\vspace*{4mm}\noindent\textbf{Elliptic fibrations:}
#1} 
\fi\par\noindent
}

\AAA 2

xyzt \left( x+y \right)  \left( y+z \right)  \left( z+t \right) 
 \left( Ax+Bt \right) 

$1234$, $1235$, $1236$, $1237$, $1238$, $1245$, $1267$, $1345$, $1367$, $1456$, $1457$, $1458$, $1468$, $1568$, $2345$, $2367$, $2467$, $2468$, $2478$, $2567$, $2678$, $3468$, $4568$, $4678$

(126834)(57)

D_4

{(1432)(5876),(14)(23)(57)}

{\hbox{1256 -- 3478,}\;\; \hbox{1258 -- 3467,}\;\; \hbox{1458 --
    2367,}\;\; \hbox{1478 -- 2356,}\;\;}

{\inf:\nc,\qdd 0:\nc,\qdd 1:\arr1,\qdd }

$p^1_4:$  
1275, 1468, 2386, 3457

$p^2_5:$  
12356, 12458, 13478, 23467

$\ l_3:$  
125, 148, 236, 347

\ELLF\qquad\(\displaystyle\def\arraystretch{1.2}
\begin{array}[t]{cccc}
  \infty&0&1&\frac AB\\[1mm]\hline\wl4 & \wl1 &\wl1&- \\ \wl1 &\wl4&-&\wl1
\end{array}
\)\\

\AAA 4

xyzt \left( x+y \right)  \left( y+z \right)  \left( Ax+By+Bz-At
 \right)  \left( Ax+Ay+Bz-At \right) 

$1234$, $1235$, $1236$, $1237$, $1238$, $1245$, $1267$, $1345$, $1367$, $1456$, $1457$, $1458$, $1467$, $1567$, $1678$, $2345$, $2367$, $2468$, $3578$, $4567$

(136752)(48)

D_6

{(137568),(16)(35)}

{\hbox{1245 -- 3678,}\;\;\hbox{1356 -- 2478,}\;\;\hbox{1578 -- 2346,}\;\;}

{\inf:\nc,\qdd 0:\nc,\qdd 1:\nc,\qdd }

$p^0_4:$  
1467, 3458

$p^1_4:$  
1245, 2346, 2478

$p^2_5:$  
12356, 12758, 23768

$\ l_3:$  
125, 236, 278

\ELLF\qquad\(\displaystyle\def\arraystretch{1.2}
\begin{array}[t]{ccccc}
  \infty&0&1&\frac BA&\frac{A-B}{A}\\[1mm]
\hline\wl4 &\wl1& \wl1 &-&- \\ \wl2 &\wl1&\wl1&\wl1&\wl1\\[-3mm]
&\multicolumn{2}{c}{\rule{.5pt}{2mm}\rule{1.3cm}{.5pt}\rule{.5pt}{2mm}}
&\multicolumn{2}{c}{\rule{.5pt}{2mm}\rule{1.3cm}{.5pt}\rule{.5pt}{2mm}}
\end{array}
\)\\

\AAA 5

xyzt \left( x+y \right)  \left( y+z \right)  \left( x+y+z-t \right) 
 \left( Ax+By+Az-At \right) 

$1234$, $1235$, $1236$, $1237$, $1238$, $1245$, $1267$, $1345$,
$1367$, $1456$, $1457$, $1458$, $1467$, $1567$, $1678$, $2345$,
$2367$, $2468$, $2578$, $4567$ 

(172)(348)(56)

C_2\oplus C_2

{(16)(35),(13)(56)}

{\hbox{1245 -- 3678,}\;\;\hbox{1356 -- 2478,}\;\;\hbox{1578 -- 2346,}\;\;}

{\inf:\arr3,\qdd 0:\nc,\qdd 1:\nc,\qdd 1/2:\arr3,\qdd }

$p^0_4:$  
1467, 3457

$p^1_4:$  
1245, 2346, 2478

$p^2_5:$  
12356, 12758, 23768

$\ l_3:$  
125, 236, 278

\ELLF\qquad\(\displaystyle
\begin{array}[t]{ccccc}
  \infty&0&1&\frac BA&-\frac{A-B}{A}\\[1mm]
\hline\wl2 &\wl1& \wl1 &\wl1&\wl1 \\[-2mm]
&\multicolumn{3}{c}{\rule{.5pt}{2mm}\rule{2.6cm}{.5pt}\rule{.5pt}{2mm}}\\[-4mm]
&&\multicolumn{3}{c}{\rule{.5pt}{3mm}\rule{2.8cm}{.5pt}\rule{.5pt}{3mm}}\\[-1mm] \wl4 &\wl1&\wl1&-&-
\end{array}
\)\\[3mm]
\rule{32mm}{0mm}\textbf{and}\qquad\(\displaystyle
\begin{array}[t]{ccccc}
  \infty&-2&-1&0&-\frac BA\\[1mm]
\hline\wl2 &\wl1& \wl2 &\wl1&- \\ \wl4 &-&\wl1&-&\wl1
\end{array}
\)\\

\AAA 8

xyzt \left( x+y \right)  \left( y+z \right)  \left( z-t \right) 
 \left( Ax-By-Bz+Bt \right) 

$1234$, $1235$, $1236$, $1237$, $1238$, $1245$, $1267$, $1345$,
$1367$, $1456$, $1457$, $1458$, $1468$, $1568$, $2345$, $2367$,
$2467$, $2567$, $2678$, $3478$, $4568$ 

(147632)

1,1

{}

{\hbox{1256 -- 3478,}\;\;\hbox{1258 -- 3467,}\;\;\hbox{1578 -- 2346,}\;\;}

{\inf:\nc,\qdd 0:\nc,\qdd -1:\arr1,\qdd }

$p^0_4:$  
1468

$p^1_4:$  
1245, 1347, 2386, 3457, 3487

$p^1_5:$  
12758

$p^2_5:$  
12356, 23467

$\ l_3:$  
125, 236, 347

\ELLF\qquad\(\displaystyle\def\arraystretch{1.2}
\begin{array}[t]{cccc}
  \infty&-1&0&-\frac B{A+B}\\[1mm]
\hline\wl3 &\wl1& \wl1 &\wl1\\ \wl1 &\wl1&\wl4&-
\end{array}
\)\\[3mm]
\rule{32mm}{0mm}\textbf{and}\qquad\(\displaystyle
\begin{array}[t]{cccc}
  \infty&0&1&\frac {A+B}B\\[1mm]
\hline\wl4 &-& \wl1 &\wl1\\ \wl1 &\wl4&\wl1&-
\end{array}
\)\\

\AAA 10

xyzt \left( x+y \right)  \left( y+z \right)  \left( z-t \right) 
 \left( Ax-By-Bz-At \right) 

$1234$, $1235$, $1236$, $1237$, $1238$, $1245$, $1267$, $1345$, $1367$, $1456$, $1457,
1458$, $2345$, $2367$, $2467$, $2567$, $2678$, $3468$, $3578$ 

(174563)

C_2\oplus C_2

{(15)(47),(17)(23)(45)}

{\hbox{1256 -- 3478,}\;\;\hbox{1258 -- 3467,}\;\;}

{\inf:\arr1,\qdd 0:\nc,\qdd -1:\arr1,\qdd }

$p^0_4:$  
1468, 5678

$p^1_4:$  
1245, 1275, 1285, 1347, 2386, 3457, 3487

$p^2_5:$  
12356, 23467

$\ l_3:$  
125, 236, 347

\ELLF\qquad\(\displaystyle\def\arraystretch{1.2}
\begin{array}[t]{cccc}
  \infty&0&-1&-\frac B{A+B}\\[1mm]
\hline\wl3 &\wl1& \wl1 &\wl1\\ - &\wl4&\wl1&\wl1
\end{array}
\)

\AAA 13

xyzt \left( x+y \right)  \left( y+z \right)  \left( x-z-t \right) 
 \left( Ax-Az+Bt \right) 

$1234$, $1235$, $1236$, $1237$, $1238$, $1245$, $1267$, $1345$, $1367$, $1468$, $1578,
2345$, $2367$, $2468$, $2578$, $3468$, $3578$, $4567$, $4568$, $4578$, $4678$, $5678$

(1728)(364)

S_3\oplus C_2\oplus C_2

{(13)(478)(56),(78),(15)(36),(478)}

{\hbox{1235 -- 4678,}\;\; \hbox{1236 -- 4578,}\;\; \hbox{1256 --
    3478,}\;\; \hbox{1356 -- 2478,}\;\; \hbox{1478 -- 2356,}\;\;}

{\inf:\nc,\qdd 0:\nc,\qdd -1:\nc,\qdd }

$p^1_4:$  
1245, 1275, 1285, 2346, 2376, 2386, 2478

$p^1_5:$  
13478, 45678

$p^2_5:$  
12356

$\ l_3:$  
125, 236, 478

\ELLF\qquad\(\displaystyle\def\arraystretch{1.2}
\begin{array}[t]{ccc}
  \infty&0&-1\\[1mm]
\hline\wl4 &\wl1& \wl1 \\ - &\wl3&\wl3
\end{array}
\)\\[3mm]
\rule{32mm}{0mm}\textbf{and}\qquad\(\displaystyle
\begin{array}[t]{cccc}
  \infty&-1&0&1\\[1mm]
\hline\wl2 &\wl1& \wl2 &\wl1\\ \wl3 &-&\wl3&-
\end{array}
\)\\

\AAA 16

xyzt \left( x+y \right)  \left( y+z \right)  \left( Ay-Bz-At \right) 
 \left( Bx-Ay+At \right) 

$1234$, $1235$, $1236$, $1237$, $1238$, $1245$, $1267$, $1345$, $1367$, $1456$, $1457,
1458$, $1468$, $1568$, $2345$, $2367$, $2478$, $3578$, $4568$

(1352)(46)(78)

C_2\oplus C_2

{(16)(35)(78),(15)(36)}

{\hbox{1258 -- 3467,}\;\;\hbox{1458 -- 2367,}\;\;}

{\inf:\nc,\qdd 0:\nc,\qdd -1:\arr1,\qdd }

$p^0_4:$  
1378, 5678

$p^1_4:$  
1275, 2386

$p^1_5:$  
12458, 23467

$p^2_5:$  
12356

$\ l_3:$  
125, 236

\ELLF\qquad\(\displaystyle\def\arraystretch{1.2}
\begin{array}[t]{cccc}
  \infty&0&1&\frac{A+B}A\\[1mm]
\hline\wl4 &-& \wl1 &\wl1\\ \wl1 &\wl3&\wl1&\wl1
\end{array}
\)

\AAA 20

xyzt \left( x+y \right)  \left( y+z \right)  \left( x-z+t \right) 
 \left( Ay-Bz-At \right) 

$1234, 1235, 1236, 1237, 1238, 1245, 1267, 1345, 1367, 1456, 1457,
1458, 2345, 2367, 2468, 2578, 3478$,  

(15478632)

1,1

{}

{\hbox{1257 -- 3468,}\;\;}

{\inf:\arr3,\qdd 0:\nc,\qdd -1:\arr1,\qdd -1/2:\arr19,\qdd }

$p^0_4:$  
1347, 3578, 4567

$p^1_4:$  
1245, 1275, 1285, 2376

$p^1_5:$  
23468

$p^2_5:$  
12356

$\ l_3:$  
125, 236

\ELLF\qquad\(\displaystyle
\begin{array}[t]{ccccc}
  \infty&0&1&\frac {2A+B}A&-\frac A{B}\\[1mm]
\hline\wl1 &\wl1& \wl2 &\wl1&\wl1 \\[-2mm]
&\multicolumn{3}{c}{\rule{.5pt}{2mm}\rule{2.6cm}{.5pt}\rule{.5pt}{2mm}}\\[-4mm]
\multicolumn{5}{c}{\rule{.5pt}{3mm}\rule{5.4cm}{.5pt}\rule{.5pt}{3mm}}\\[-1mm] \wl4 &\wl1&\wl1&-&-
\end{array}
\)\\

\AAA 21

xyzt \left( x+y \right)  \left( y+z \right)  \left( Ax-By+ \left( -A-B
 \right) t \right)  \left( Ax+Bz-At \right) 

$1234$, $1235$, $1236$, $1237$, $1238$, $1245$, $1267$, $1345$, $1367$, $1456$, $1457,
1458$, $2345$, $2367$, $2468$, $3478$, $5678$

(1347652)

C_2

{(15)(47)}

{\hbox{1235 -- 4678,}\;\;\hbox{1457 -- 2368,}\;\;}

{\inf:\nc,\qdd 0:\nc,\qdd -1:\nc,\qdd }

$p^0_4:$  
1348, 3578, 4678

$p^1_4:$  
1285, 2346, 2376, 2386

$p^1_5:$  
12457

$p^2_5:$  
12356

$\ l_3:$  
125, 236

\ELLF\qquad\(\displaystyle
\begin{array}[t]{ccccc}
  \infty&0&1&-\frac BA&-\frac{B}{A+B}\\[1mm]
\hline\wl1 &\wl2& \wl1 &\wl1&\wl1 \\[-2mm]
\multicolumn{4}{c}{\rule{.5pt}{2mm}\rule{4cm}{.5pt}\rule{.5pt}{2mm}}\\[-4mm]
&&\multicolumn{3}{c}{\rule{.5pt}{3mm}\rule{2.8cm}{.5pt}\rule{.5pt}{3mm}}\\[-1mm] \wl4 &\wl1&-&\wl1&-
\end{array}
\)\\[3mm]
\rule{32mm}{0mm}\textbf{and}\qquad\(\displaystyle
\begin{array}[t]{cccc}
  \infty&0&-1&-\frac A{A+B}\\[1mm]
\hline\wl3 &\wl1& \wl1 &\wl1 \\ \wl1 &\wl1&\wl3&\wl1
\end{array}
\)\\

\AAA 33

xyzt \left( x+y \right)  \left( y+z \right)  \left( x-z+t \right) 
 \left( Ax-Ay-Az+Bt \right) 

$1234$, $1235$, $1236$, $1237$, $1238$, $1245$, $1345$, $1456$,
$1457$, $1458$, $1678$, $2345$, $2467$, $2568$, $3468$ 

(1462)(78)

C_2

{(16)(35)}

{\hbox{1356 -- 2478,}\;\;}

{\inf:\nc,\qdd 0:\nc,\qdd 1:\arr3,\qdd 1/2:\arr32,\qdd }

$p^0_4:$  
1347, 1468, 2478, 4567

$p^1_4:$  
1245, 1275, 1285, 2346, 2376, 2386

$p^2_5:$  
12356

$\ l_3:$  
125, 236

\ELLF\qquad\(\displaystyle
\begin{array}[t]{ccccc}
  \infty&0&-1&1&-\frac{A}{A-B}\\[1mm]
\hline\wl2 &\wl2& \wl1 &\wl1&- \\[-1mm] \wl3 &\wl1&\wl1&-&\wl1
\end{array}
\)\\

\AAA 34

xyzt \left( x+y \right)  \left( x+z \right)  \left( x+y+z+t \right) 
 \left( Ay-Az+Bt \right) 

$1234$, $1235$, $1236$, $1237$, $1238$, $1245$, $1345$, $1456$,
$1457$, $1458$, $2345$, $2467$, $2568$, $3468$, $3567$ 

(2546)(78)

D_4

{(25)(78),(26)(35)}

{}

{\inf:\nc,\qdd 0:\nc,\qdd 1:\arr19,\qdd -1:\arr19,\qdd }

$p^0_4:$  
2348, 2467, 3457, 4568

$p^1_4:$  
1245, 1275, 1285, 1346, 1376, 1386

$p^2_5:$  
12356

$\ l_3:$  
125, 136

\ELLF{}

\AAA 35

xyzt \left( Ax+By \right)  \left( Ax+By+At \right)  \left( x+y+z+t
 \right)  \left( By+Bz+At \right) 

$1234$, $1235$, $1236$, $1237$, $1238$, $1245$, $1345$, $1456$,
$1457$, $1458$, $2345$, $2467$, $2568$, $3468$, $3578$ 

(1425)(386)

C_2\oplus C_2

{(14)(26),(12)(38)(46)}

{}

{\inf:\nc,\qdd 0:\nc,\qdd 1:\arr1,\qdd -1:\arr32}

$p^0_4:$  
1368, 1478, 2348, 2367

$p^1_4:$  
1235, 1275, 1285, 3456, 4576, 4586

$p^2_5:$  
12456

$\ l_3:$  
125, 456

\ELLF{}

\AAA 36

xyzt \left( x+y \right)  \left( y-z+t \right)  \left( Ax-By+Bz+At
 \right)  \left( Ax+Ay+Bz+At \right) 

$1234$, $1235$, $1236$, $1237$, $1238$, $1245$, $1345$, $1456$,
$1457$, $1458$, $1678$, $2345$, $2467$, $2568$, $3478$ 

(12)(375)(48)

C_2

{(18)(36)(57)}

{\hbox{1578 -- 2346,}\;\;}

{\inf:\nc,\qdd 0:\nc,\qdd -1:\nc,\qdd -1/2:\arr32,\qdd }

$p^0_4:$  
1368, 1467, 2346, 3458

$p^1_4:$  
1235, 1245, 1265, 2378, 2478, 2678

$p^2_5:$  
12758

$\ l_3:$  
125, 278

\ELLF\qquad\(\displaystyle
\begin{array}[t]{ccccc}
  \infty&0&-1&\frac BA&\frac{A+B}{A}\\[1mm]
\hline\wl2 &\wl1& \wl1 &\wl1&\wl1 \\[-2mm]
&\multicolumn{3}{c}{\rule{.5pt}{2mm}\rule{2.6cm}{.5pt}\rule{.5pt}{2mm}}\\[-4mm]
&&\multicolumn{3}{c}{\rule{.5pt}{3mm}\rule{2.8cm}{.5pt}\rule{.5pt}{3mm}}\\[-1mm] \wl3 &\wl1&\wl1&\wl1&-
\end{array}
\)\\

\AAA 53

xyzt \left( x+y \right)  \left( z+t \right)  \left( Ax-By-Az-At
 \right)  \left( Bx+By-Bz+At \right) 

$1234$, $1235$, $1236$, $1237$, $1238$, $1245$, $1345$, $1467$,
$1468$, $1478$, $1678$, $2345$, $2467$, $2568$, $3467$, $3578$,
$4567$, $4678$ 

(1785426)

C_2\oplus C_2

{(13)(24)(56)(78),(12)(34)}

{\hbox{1257 -- 3468,}\;\;\hbox{1258 -- 3467,}\;\;\hbox{1267 -- 3458,}\;\;}

{\inf:\nc,\qdd 0:\nc,\qdd -1:\nc,\qdd }

$p^0_4:$  
1378, 2478

$p^1_4:$  
1235, 1245, 1285, 1346, 2346, 3476

$p^1_5:$  
12657, 34568

$\ l_3:$  
125, 346

\ELLF\qquad\(\displaystyle
\begin{array}[t]{cccc}
  \infty&0&1&-\frac BA\\[1mm]
\hline\wl4 &-& \wl1 &\wl1 \\[-1mm] -&\wl4 &\wl1&\wl1
\end{array}
\)\\[3mm]
\rule{32mm}{0mm}\textbf{and}\qquad\(\displaystyle
\begin{array}[t]{cccc}
  \infty&0&1&-\frac BA\\[1mm]
\hline\wl3 &\wl1& \wl1 &\wl1 \\ \wl1 &\wl3&\wl1&\wl1
\end{array}
\)\\

\AAA 70

xyzt \left( x-y+z \right)  \left( y-z-t \right)  \left( x-y-t \right) 
 \left( Ax+By \right) 

$1234$, $1235$, $1236$, $1237$, $1238$, $1245$, $1267$, $1345$,
$1367$, $1468$, $1578$, $2345$, $2367$, $2478$, $4567$ 

(12)(368)(57)

C_2

{(37)(45)}

{\hbox{1268 -- 3457,}\;\;}

{\inf:\nc,\qdd 0:\nc,\qdd -1:\arr3,\qdd -1/2:\arr69,\qdd }

$p^0_4:$  
1456, 2346, 2567, 3457

$p^1_4:$  
1268

$p^1_5:$  
12358, 12478

$\ l_3:$  
128

\ELLF\qquad\(\displaystyle
\begin{array}[t]{ccccc}
  \infty&0&1&-1&\frac A{A+B}\\[1mm]
\hline\wl3 &\wl1& \wl1&-&\wl1 \\[-1mm] \wl2 &\wl2&\wl1&\wl1&-
\end{array}
\)\\

\AAA 71

xyzt \left( x+y \right)  \left( x+y+z+t \right)  \left( Ax-By+Az
 \right)  \left( By-Az-At \right) 

$1234$, $1235$, $1236$, $1237$, $1238$, $1245$, $1267$, $1345$,
$1367$, $1468$, $1578$, $2345$, $2367$, $2478$, $3568$ 

(12)(365)(48)

C_2\oplus C_2

{(15)(38)(67),(36)(78)}

{}

{\inf:\arr1,\qdd 0:\nc,\qdd -1:\arr1,\qdd -2:\arr69,\qdd }

$p^0_4:$  
1478, 2348, 2467, 3456

$p^1_4:$  
1245

$p^1_5:$  
12357, 12658

$\ l_3:$  
125

\ELLF{}

\AAA 72

xyzt \left( x+y+z \right)  \left( y+z+t \right)  \left( x-y-t \right) 
 \left( Ay+Bz+Bt \right) 

$1234$, $1235$, $1236$, $1237$, $1238$, $1245$, $1267$, $1345$,
$1367$, $1468$, $1578$, $2345$, $2367$, $2478$, $2568$ 

(1837462)

D_4

{(26)(3745),(26)(34)}

{}

{\inf:\nc,\qdd 0:\arr19,\qdd 2:\arr19,\qdd 1:\nc,\qdd }

$p^0_4:$  
1235, 1247, 1367, 1456

$p^1_4:$  
1268

$p^1_5:$  
23468, 25678

$\ l_3:$  
268

\ELLF{}

\AAA 73

xyzt \left( x+y-z-t \right)  \left( y-z-t \right) \times\\\rule{5cm}{0cm} \times \left( Ax+Ay+Bz+Bt
 \right)  \left( Ax-By+Bt \right) 

$1234$, $1235$, $1236$, $1237$, $1238$, $1245$, $1267$, $1345$,
$1367$, $1468$, $2345$, $2367$, $2478$, $3568$, $4567$  

(1348526)

C_2

{(15)(23)(78)}

{\hbox{1456 -- 2378,}\;\;}

{\inf:\nc,\qdd 0:\nc,\qdd -1:\nc,\qdd -2:\arr69,\qdd }

$p^0_4:$  
1248, 2346, 2378, 3457

$p^1_4:$  
1456

$p^1_5:$  
12567, 13568

$\ l_3:$  
156

\ELLF\qquad\(\displaystyle
\begin{array}[t]{ccccc}
  \infty&0&-1&\frac AB&-\frac B{A+2B}\\[1mm]
\hline\wl1 &\wl1& \wl2 &\wl1&\wl1 \\[-2mm]
&\multicolumn{3}{c}{\rule{.5pt}{2mm}\rule{2.6cm}{.5pt}\rule{.5pt}{2mm}}\\[-4mm]
\multicolumn{5}{c}{\rule{.5pt}{3mm}\rule{5.6cm}{.5pt}\rule{.5pt}{3mm}}\\[-1mm] \wl1 &\wl1&\wl3&\wl1&-
\end{array}
\)\\

\AAA 94

xyzt \left( x+y \right)  \left( x+y+z-t \right)  \left( Ax-By+Az
 \right)  \left( By-Az-Bt \right) 

$1234$, $1235$, $1236$, $1237$, $1238$, $1245$, $1345$, $1467$,
$1568$, $2345$, $2468$, $2578$, $3478$ 

(348675)

1,1

{}

{}

{\inf:\nc,\qdd 0:\nc,\qdd -1:\arr1,\qdd
  $\frac{\sqrt{-3}}2-\frac12$:\arr A,\qdd\\\rule{22mm}{0mm}
  $-\frac{\sqrt{-3}}2-\frac12$:\arr A,\qdd }

$p^0_4:$  
1368, 1478, 2348, 2467, 3456
$p^1_4:$  
1245, 1265, 1285
$p^1_5:$  
12357
$\ l_3:$  
125

\ELLF{}

\AAA 95

xyzt \left( x+y \right)  \left( x+y-z+t \right)  \left( Ax-By+Bz
 \right)  \left( Ax-By-Az-Bt \right) 

$1234$, $1235$, $1236$, $1237$, $1238$, $1245$, $1345$, $1467$, $1568$, $2345$, $2468$, $3678$, $4578$ 

(234875)

1,1

{}

{\hbox{1256 -- 3478,}\;\;\hbox{1357 -- 2468,}\;\;}

{\inf:\nc,\qdd 0:\arr3,\qdd -1:\nc,\qdd -2:\arr93,\qdd }

$p^0_4:$  
1368, 1467, 2468, 3456, 3478

$p^1_4:$  
1245, 1265, 1285

$p^1_5:$  
12357

$\ l_3:$  
125

\ELLF\qquad\(\displaystyle
\begin{array}[t]{ccccc}
  \infty&0&-1&\frac BA&-\frac {A+2B}B\\[1mm]
\hline\wl1 &\wl1& \wl2 &\wl1&\wl1 \\[-2mm]
&\multicolumn{4}{c}{\rule{.5pt}{2mm}\rule{4cm}{.5pt}\rule{.5pt}{2mm}}\\[-4mm]
\multicolumn{4}{c}{\rule{.5pt}{3mm}\rule{4cm}{.5pt}\rule{.5pt}{3mm}}\\[-1mm] \wl3 &\wl1&\wl1&\wl1&-
\end{array}
\)\\

\AAA 96

xyzt \left( x+y \right)  \left( x+y-z+t \right)  \left( Ax-By+Bz+At
 \right)  \left( Ay+Bz+At \right) 

$1234$, $1235$, $1236$, $1237$, $1238$, $1245$, $1345$, $1467$, $1568$, $2345$, $2468$, $2578$, $3678$ 

(12)(368475)

C_2

{(12)(36)(78)}

{\hbox{1278 -- 3456,}\;\;}

{\inf:\nc,\qdd 0:\nc,\qdd -1:\nc,\qdd -1/2:\arr32,\qdd }

$p^0_4:$  
1368, 1467, 2348, 2367, 3456

$p^1_4:$  
1235, 1245, 1265

$p^1_5:$  
12758

$\ l_3:$  
125

\ELLF\qquad\(\displaystyle
\begin{array}[t]{ccccc}
  \infty&0&-1&\frac BA&-\frac A{2A+B}\\[1mm]
\hline\wl1 &\wl1& \wl2 &\wl1&\wl1 \\[-2mm]
&\multicolumn{3}{c}{\rule{.5pt}{2mm}\rule{2.6cm}{.5pt}\rule{.5pt}{2mm}}\\[-4mm]
\multicolumn{5}{c}{\rule{.5pt}{3mm}\rule{5.6cm}{.5pt}\rule{.5pt}{3mm}}\\[-1mm] \wl3 &\wl1&\wl1&\wl1&-
\end{array}
\)\\

\AAA 97

xyzt \left( x+y \right)  \left( x+y+z+t \right)  \left( y-z-t \right) 
 \left( Ax-Bz+At \right) 

$1234$, $1235$, $1236$, $1237$, $1238$, $1245$, $1345$, $1467$, $1568$, $2345$, $2468$, $2567$, $3678$

(1365)(487)

C_2\oplus C_2

{(25)(67),(25)(48),(25)(67)}

{\hbox{1348 -- 2567,}\;\;}

{\inf:\arr19,\qdd 0:\nc,\qdd -1:\nc,\qdd -1/2:\arr93,\qdd }

$p^0_4:$  
1348, 2347, 2368, 3456, 3578

$p^1_4:$  
1235, 1245, 1285

$p^1_5:$  
12657

$\ l_3:$  
125

\ELLF\qquad\(\displaystyle
\begin{array}[t]{ccccc}
  \infty&0&-\frac12&-1&\frac A{B}\\[1mm]
\hline\wl3 &\wl1&- &\wl1&\wl1 \\[1mm] \wl1 &\wl2&\wl1&\wl2&-
\end{array}
\)\\

\AAA 98

xyzt \left( x+y+z \right)  \left( y+z+t \right)  \left( x+z-t \right) 
 \left( Ay+Bz+Bt \right) 

$1234, 1235, 1236, 1237, 1238, 1245, 1345, 1467, 1568, 2345, 2468, 2567, 4578$

(183562)

C_2\oplus C_2

{(17)(26),(26)(34)}

{\hbox{1347 -- 2568,}\;\;}

{\inf:\nc,\qdd 0:\arr19,\qdd 2:\arr93,\qdd 1:\nc,\qdd }

$p^0_4:$  
1235, 1347, 1456, 2457, 3567

$p^1_4:$  
1268, 2568, 2678

$p^1_5:$  
23468

$\ l_3:$  
268

\ELLF\qquad\(\displaystyle
\begin{array}[t]{ccccc}
  \infty&0&1&-1&\frac {A-B}B\\[1mm]
\hline\wl2 &\wl2& \wl1 &\wl1&- \\[1mm] \wl1 &\wl1&-&\wl3&\wl1
\end{array}
\)\\

\AAA 99

xyzt \left( x+y+z \right)  \left( x+z-t \right) \times\\\rule{4cm}{0cm} \times \left( Ax+ \left( A+B
 \right) y-Bz+Bt \right) \left( Ax-By-Bz \right) 

$1234$, $1235$, $1236$, $1237$, $1238$, $1245$, $1345$, $1467$, $1568$, $2345$, $2468$, $2567$, $3478$ 

(1247835)

C_2

{(15)(47)}

{}

{\inf:\nc,\qdd 0:\arr19,\qdd -1:\nc,\qdd -2:\arr19,\qdd }

$p^0_4:$  
1267, 1346, 2456, 2478, 3567

$p^1_4:$  
1458, 1568, 1578

$p^1_5:$  
12358

$\ l_3:$  
158

\ELLF{}

\AAA 100

xyzt \left( x+y-z+t \right)  \left( Ax+Ay+Bz \right)\times\\\rule{6cm}{0cm} \times  \left( Ay+Bz+At
 \right)  \left( By-Bz-At \right) 

$1234$, $1235$, $1236$, $1237$, $1238$, $1245$, $1345$, $1467$, $1568$, $2345$, $2468$, $3567$, $4578$

(167)(2358)

C_2

{(28)(34)(56)}

{\hbox{1278 -- 3456,}\;\;}

{\inf:\nc,\qdd 0:\nc,\qdd -1:\nc,\qdd -2:\arr69,\qdd }

$p^0_4:$  
1236, 1357, 1458, 1467, 3456

$p^1_4:$  
1278, 2578, 2678

$p^1_5:$  
23478

$\ l_3:$  
278

\ELLF\qquad\(\displaystyle
\begin{array}[t]{ccccc}
  \infty&0&-1&\frac BA&-\frac {A+2B}B\\[1mm]
\hline\wl1 &\wl1& \wl2 &\wl1&\wl1 \\[-2mm]
&\multicolumn{4}{c}{\rule{.5pt}{2mm}\rule{4cm}{.5pt}\rule{.5pt}{2mm}}\\[-4mm]
\multicolumn{4}{c}{\rule{.5pt}{3mm}\rule{4cm}{.5pt}\rule{.5pt}{3mm}}\\[-1mm] \wl1 &\wl3&\wl1&\wl1&-
\end{array}
\)\\

\AAA 144

xyzt \left( x-y+z+t \right)  \left( Ax+By+Az \right)\times\\\rule{5cm}{0cm} \times  \left( By+Az+At
 \right)  \left( Bx-By-Az+Bt \right) 

$1234$, $1235$, $1236$, $1237$, $1238$, $1456$, $1457$, $1467$,
$1567$, $2458$, $2678$, $3468$, $3578$, $4567$ 

(175328)

D_4

{(35)(67),(17)(46)}

{\hbox{1467 -- 2358,}\;\;}

{\inf:\nc,\qdd 0:\nc,\qdd -1:\nc,\qdd -2:\arr19,\qdd }

$p^0_4:$  
1236, 1257, 2347, 2456

$p^1_4:$  
1358, 2358, 3458, 3568, 3578

$p^0_5:$  
14678

$\ l_3:$  
358

\ELLF\qquad\(\displaystyle
\begin{array}[t]{ccccc}
  \infty&0&-1&-2&\frac {A}B\\[1mm]
\hline\wl2 &\wl1& \wl2 &\wl1&- \\[1mm] \wl1 &-&\wl1&-&\wl4
\end{array}
\)\\

\AAA 152

xyzt \left( x+y+z+t \right)  \left( y+t \right) \times\\\rule{5cm}{0cm} \times \left( x-y-z+t
 \right)  \left( Ax-Ay+Bz-Bt \right) 

$1234$, $1235$, $1236$, $1237$, $1238$, $1456$, $1478$, $2457$,
$2468$, $3458$, $3567$

(1742)(36)

C_2

{(13)(24)}

{}

{\inf:\arr32,\qdd 0:\arr32,\qdd 1:\arr19,\qdd -1:\nc,\qdd }

$p^0_4:$  
1278, 1356, 1457, 2357, 3478, 5678

$p^1_4:$  
1246, 2346, 2456, 2476, 2486

$\ l_3:$  
246

\ELLF{}

\AAA 153

xyzt \left( x+y+z \right)  \left( y+z+t \right)\times\\\rule{5cm}{0cm} \times   \left( Ax-By+At
 \right) \left( Ax-By+Az+At \right) 

$1234$, $1235$, $1236$, $1237$, $1238$, $1456$, $1478$, $2457$,
$2468$, $3458$, $5678$ 

(1824573)

C_2\oplus C_2

{(14)(56),(14)(38)}

{\hbox{1456 -- 2378,}\;\;}

{\inf:\arr19,\qdd 0:\nc,\qdd -1:\arr3,\qdd -1/2:\arr93,\qdd }

$p^0_4:$  
1235, 1247, 1268, 1456, 2346, 2458

$p^1_4:$  
1378, 2378, 3478, 3578, 3678

$\ l_3:$  
378

\ELLF\qquad\(\displaystyle
\begin{array}[t]{ccccc}
  \infty&0&-1&-2&\frac B{A}\\[1mm]
\hline\wl2 &\wl1& \wl2 &\wl1&- \\[1mm] \wl1 &\wl1&\wl1&-&\wl3
\end{array}
\)\\

\AAA 154

xyzt \left( x+y+z \right)  \left( x+y+z-t \right)\times\\\rule{4cm}{0cm} \times  \left( Ax+ \left( A
+B \right) y-Bz+Bt \right)  \left( Ax-Bz-At \right) 

$1234$, $1235$, $1236$, $1237$, $1238$, $1456$, $1478$, $2457$,
$2568$, $3458$, $3678$ 

(17634285)

C_2

{(17)(38)(45)}

{}

{\inf:\arr1,\qdd 0:\nc,\qdd -1:\nc,\qdd -2:\arr32,\qdd }

$p^0_4:$  
1235, 1267, 1348, 2368, 2478, 3578

$p^1_4:$  
1456, 2456, 3456, 4576, 4586

$\ l_3:$  
456

\ELLF{}

\AAA 155

xyzt \left( Ax+By+Az \right)  \left( Ax+ \left( A+B \right) y-Bz+At
 \right) \times\\\rule{5cm}{0cm} \times \left( Ax-Bz-Bt \right)  \left( Ax+By+Az+At \right) 

$1234, 1235, 1236, 1237, 1238, 1456, 1478, 2457, 2568, 3468, 3578$ 

(174)(2538)

S_3

{(16)(37)(45),(237)(485)}

{}

{\inf:\nc,\qdd 0:\nc,\qdd -1:\nc,\\ \rule{22mm}{0cm}
  $\frac{\sqrt{-3}}2-\frac12$:\arr A,\qdd
  \mbox{$-\frac{\sqrt{-3}}2-\frac12$}:\arr A,\qdd }

$p^0_4:$  
1235, 1278, 1347, 2368, 2467, 3567

$p^1_4:$  
1458, 2458, 3458, 4568, 4578

$\ l_3:$  
458

\ELLF{}

\AAA 197

xyzt \left( x-y-z+t \right)  \left( Ax+By+Bz \right)\times\\\rule{6cm}{0cm} \times  \left( By+Bz+At
 \right)  \left( Ax+Bz+At \right) 

$1234, 1235, 1245, 1267, 1345, 1368, 1478, 2345, 2378, 2468, 5678$

(1287364)

C_2\oplus C_2

{(38)(67),(14)(67),(38)(67)}

{\hbox{1467 -- 2358,}\;\;}

{\inf:\nc,\qdd 0:\nc,\qdd -1:\arr3,\qdd -2:\arr93,\qdd }

$p^0_4:$  
1236, 1278, 1348, 2347, 2358, 2468

$p^0_5:$  
14567

\ELLF\qquad\(\displaystyle
\begin{array}[t]{ccccc}
  \infty&0&1&-\frac BA&-\frac {2B}{A}\\[1mm]
\hline\wl2 &\wl1&-& \wl2 &\wl1 \\[1mm] \wl1 &\wl1&\wl3&\wl1&-
\end{array}
\)\\

\AAA 198

xyzt \left( x+y+z \right)  \left( y+z+t \right)  \left( x-y-t \right) 
 \left( Ax-Ay-Az+Bt \right) 

$1234$, $1235$, $1245$, $1267$, $1345$, $1368$, $1478$, $2345$,
$2378$, $2468$, $3567$

(1285436)

C_2

{(16)(37)}

{}

{\inf:\arr69,\qdd 0:\nc,\qdd -1:\arr19,\qdd -1/2:\arr69,\qdd }

$p^0_4:$  
1235, 1247, 1367, 2346, 2567, 3478

$p^0_5:$  
14568

\ELLF{}

\AAA 199

xyzt \left( x+y+z \right)  \left( y+z+t \right) \times\\\rule{5cm}{0cm} \times \left( Ax+By+ \left( 
A-B \right) z \right)  \left( Ax+By+Az+Bt \right) 

$1234$, $1235$, $1245$, $1267$, $1345$, $1368$, $1478$, $2345$,
$2378$, $2568$, $4567$  

(154732)

C_2

{(15)(23)(46)}

{}

{\inf:\nc,\qdd 0:\nc,\qdd 2:\arr69,\qdd 1:\arr1,\qdd }

$p^0_4:$  
1368, 1456, 2346, 2458, 2678, 3478

$p^0_5:$  
12357

\ELLF{}

\AAA 200

xyzt \left( x+y+z+t \right)  \left( Ax+Ay-Bz-Bt \right)\times\\\rule{6.5cm}{0cm} \times  \left( Ay-Bz+
At \right)  \left( Ax-By-Bt \right) 

$1234$, $1235$, $1245$, $1267$, $1345$, $1368$, $1478$, $2345$, $2568$, $3578$, $4567$

(268)(35)(47)

S_3

{(13)(26)(58),(13)(46)(57)}

{}

{\inf:\nc,\qdd 0:\nc,\qdd -1:\nc,\qdd
  $\frac{\sqrt{-3}}2-\frac12$:\arr A,\qdd\\\rule{22mm}{0cm} 
  \mbox{$\frac{-\sqrt{-3}}2-\frac12$}:\arr A,\qdd }

$p^0_4:$  
1248, 1256, 1467, 2347, 2368, 3456

$p^0_5:$  
13578

\ELLF{}

\AAA 242

xyzt \left( x+y+z \right)  \left( x+z-t \right)\times\\\rule{3cm}{0cm} \times  \left( Ax+ \left( A+B
 \right) y-Bz+Bt \right)  \left(  \left( A+B \right) x+ \left( A+B
 \right) y+Bt \right) 

$1234$, $1256$, $1278$, $1357$, $1368$, $2457$, $2468$, $3456$,
$3478$, $5678$ 

(147685)

G_{64,138}

{(1658)(2437),(1253)(4876),(1736)(2854)}

{\hbox{1235 -- 4678,}\;\;\hbox{1248 -- 3567,}\;\;\hbox{1267 --
    3458,}\;\; \hbox{1346 -- 2578,}\;\; \hbox{1378 -- 2456,}\;\;}

{\inf:\nc,\qdd 0:\arr238,\qdd -1:\nc,\qdd -2:\arr238,\qdd }

$p^0_4:$  
1235, 1248, 1267, 1346, 1378, 2456, 2578, 3458, 3567, 4678

\ELLF\qquad\(\displaystyle
\begin{array}[t]{cccccc}
  \infty&0&1&-1&\frac {A+B}B&-\frac {A+B}{B}\\[1mm]
\hline\wl2 &\wl2& \wl1 &\wl1&-&- \\[1mm] \wl2 &\wl2&-&-&\wl1&\wl1
\end{array}
\)\\[3mm]
\rule{32mm}{0mm}\textbf{and}\qquad\(\displaystyle
\begin{array}[t]{ccccc}
  \infty&0&-1&-\frac {A+B}B&-\frac {A+2B}B\\[1mm]
\hline\wl2 &\wl1& \wl1 &\wl1&\wl1 \\[-2mm]
&&\multicolumn{2}{c}{\rule{.5pt}{2mm}\rule{1.3cm}{.5pt}\rule{.5pt}{2mm}}\\[-4mm]
&\multicolumn{4}{c}{\rule{.5pt}{3mm}\rule{4.2cm}{.5pt}\rule{.5pt}{3mm}}\\[-1mm]
\wl2 &\wl1& \wl1 &\wl1&\wl1 \\[-2mm]
&&\multicolumn{2}{c}{\rule{.5pt}{2mm}\rule{1.3cm}{.5pt}\rule{.5pt}{2mm}}\\[-4mm]
&\multicolumn{4}{c}{\rule{.5pt}{3mm}\rule{4.2cm}{.5pt}\rule{.5pt}{3mm}}\\ 
\end{array}
\)\\

\AAA 243

xyzt \left( x+y+z \right)  \left( y+z+t \right)  \left( x+y+t \right) 
 \left( Ax+By+Az+At \right) 

$1234$, $1256$, $1278$, $1357$, $1368$, $1458$, $2367$, $2457$, $3456$ 

(157642)

S_3

{(34)(57),(13)(67)}

{\hbox{1268 -- 3457,}\;\;\hbox{1367 -- 2458,}\;\;\hbox{1456 -- 2378,}\;\;}

{\inf:\arr239,\qdd 0:\nc,\qdd 1:\arr3,\qdd 1/2:\arr238,\\ \rule{22mm}{0cm} 2/3:\arr240,\qdd }

$p^0_4:$  
1235, 1247, 1268, 1367, 1456, 2346, 2378, 2458, 3457

\ELLF\qquad\(\displaystyle
\begin{array}[t]{cccccc}
  \infty&0&-1&-2&-\frac A{2A-B}&-\frac {B}{A}\\[1mm]
\hline\wl1 &\wl1& \wl2 &-&\wl1&\wl1 \\[-2mm] 
\multicolumn{5}{c}{\rule{.5pt}{2mm}\rule{5.4cm}{.5pt}\rule{.5pt}{2mm}}\\[-4mm]
&\multicolumn{5}{c}{\rule{.5pt}{3mm}\rule{5.6cm}{.5pt}\rule{.5pt}{3mm}}\\[-1mm]
\wl2 &\wl1&\wl2&\wl1&-&-
\end{array}
\)\\

\AAA 244

xyzt \left( x+y+z+t \right)  \left( Ax+Ay+Bz+Bt \right)\times\\\rule{6cm}{0cm} \times  \left( Ay+Bz+
At \right)  \left( Ax+Bz+At \right) 

$1234$, $1256$, $1278$, $1357$, $1368$, $2457$, $2468$, $3458$, $5678$ 

(154)(2863)

C_2\oplus C_2

{(12)(34)(56)(78),(34)(56)}

{\hbox{1278 -- 3456,}\;\;\hbox{1357 -- 2468,}\;\;\hbox{1467 -- 2358,}\;\;}

{\inf:\nc,\qdd 0:\nc,\qdd 2:\arr240,\qdd 1:\nc,\\\rule{22mm}{0cm} 1/2:\arr240,\qdd }

$p^0_4:$  
1256, 1278, 1348, 1357, 1467, 2347, 2358, 2468, 3456

\ELLF\qquad\(\displaystyle
\begin{array}[t]{cccccc}
  \infty&0&-1&\frac{B^{2}}{A(A-2B)}&-\frac {B(2A-B)}{A^{2}}&-\frac {B}{A}\\[1mm]
\hline\wl1 &\wl1& \wl1 &\wl1&-&\wl2 \\[-2mm] 
&\multicolumn{2}{c}{\rule{.5pt}{2mm}\rule{1.3cm}{.5pt}\rule{.5pt}{2mm}}\\[-4mm]
\multicolumn{4}{c}{\rule{.5pt}{3mm}\rule{4.2cm}{.5pt}\rule{.5pt}{3mm}}\\ 
\wl1 &\wl1&\wl1&-&\wl1&\wl2\\[-2mm]
\multicolumn{3}{c}{\rule{.5pt}{2mm}\rule{2.6cm}{.5pt}\rule{.5pt}{2mm}}\\[-4mm]
&\multicolumn{4}{c}{\rule{.5pt}{3mm}\rule{4.8cm}{.5pt}\rule{.5pt}{3mm}\rule{4mm}{0mm}}\\ 
\end{array}
\)\\[5mm]
\rule{32mm}{0mm}\textbf{and}\qquad\(\displaystyle
\begin{array}[t]{cccccc}
  \infty&0&-\frac12&-1&-\frac {A}B&-\frac BA\\[1mm]
\hline\wl1 &\wl2& \wl1 &\wl2&-&- \\[1mm]
\wl1 &\wl1&-& \wl2 &\wl1&\wl1 \\[-2mm]
\multicolumn{2}{c}{\rule{.5pt}{2mm}\rule{1.3cm}{.5pt}\rule{.5pt}{2mm}}
&&&\multicolumn{2}{c}{\rule{.5pt}{2mm}\rule{1.3cm}{.5pt}\rule{.5pt}{2mm}}
\end{array}
\)\\

\AAA 246

xyzt \left( x+y+z \right)  \left( Ax+ \left( A+B \right) y-Bz+Bt
 \right) \times\\\rule{5cm}{0cm} \times \left( Ax-Bz-At \right)  \left( Ax+ \left( A+B \right) y+Az-
At \right) 

$1234$, $1256$, $1278$, $1357$, $1468$, $2358$, $2467$, $3678$, $4578$

(16327)(45)

D_4\oplus C_2

{(23)(46)(78),(14)(37)(56),(14)(28)(56)}

{}

{\inf:\arr1,\qdd 0:\nc,\qdd -1:\nc,\qdd -2:\arr241,\qdd }

$p^0_4:$  
1235, 1268, 1347, 1578, 2378, 2458, 2467, 3468, 3567

\ELLF{}

\AAA 247

xyzt \left( x+y+z \right)  \left( y+z+t \right)  \left( x-y-t \right) 
 \left( Ax-Bz+Bt \right) 

$1234, 1256, 1278, 1357, 1368, 1458, 2367, 5678$

(267584)

D_4

{(26)(34),(37)(45)}

{\hbox{1348 -- 2567,}\;\;\hbox{1578 -- 2346,}\;\;}

{\inf:\nc,\qdd 0:\arr93,\qdd -1:\arr238,\qdd -2:\arr93,\qdd }

$p^0_4:$  
1235, 1247, 1348, 1367, 1456, 1578, 2346, 2567

\ELLF\qquad\(\displaystyle
\begin{array}[t]{cccccc}
  \infty&0&1&-1&-\frac {B}{A+B}&-\frac {A+B}B\\[1mm]
\hline\wl2 &\wl2& \wl1 &\wl1&-&- \\[1mm]
\wl1 &\wl1&\wl1& \wl1 &\wl1&\wl1 \\[-2mm]
\multicolumn{5}{c}{\rule{.5pt}{2mm}\rule{5.4cm}{.5pt}\rule{.5pt}{2mm}}\\[-4.2mm]
&\multicolumn{5}{c}{\rule{.5pt}{3mm}\rule{5.4cm}{.5pt}\rule{.5pt}{3mm}}\\[-4.8mm]
&&\multicolumn{2}{c}{\rule{.5pt}{4.5mm}\rule{1.4cm}{.5pt}\rule{.5pt}{4.5mm}}\\
\end{array}
\)\\

\AAA 248

xyzt \left( x+y+z \right)  \left( y-z-t \right)  \left( x+z+t \right) 
\times\\\rule{7cm}{0cm} \times \left( Ax+ \left( A+B \right) y-Bz+At \right) 

$1234, 1256, 1278, 1357, 1368, 1458, 2358, 2367$

(17)(23)(48)(56)

C_2\oplus C_2

{(15)(23),(15)(46)}

{\hbox{1235 -- 4678,}\;\;}

{\inf:\arr239,\qdd 0:\arr19,\qdd -1:\nc,\qdd -2:\arr245,\qdd\\\rule{22mm}{0cm} -1/2:\arr239,\qdd -2/3:\arr245,\qdd }

$p^0_4:$  
1235, 1267, 1347, 2346, 2378, 2457, 3567, 4678

\ELLF\qquad\(\displaystyle
\begin{array}[t]{cccccc}
  \infty&0&1&-1&\frac {A+B}{B}&\frac {2A+B}{A+B}\\[1mm]
\hline\wl2 &\wl2& \wl1 &\wl1&-&- \\[1mm]
\wl1 &\wl1&\wl2& - &\wl1&\wl1 \\[-2mm]
\multicolumn{5}{c}{\rule{.5pt}{2mm}\rule{5.4cm}{.5pt}\rule{.5pt}{2mm}}\\[-4.2mm]
&\multicolumn{5}{c}{\rule{.5pt}{3mm}\rule{5.4cm}{.5pt}\rule{.5pt}{3mm}}\\
\end{array}
\)\\

\AAA 249

xyzt \left( x+y+z \right)  \left( x+z+t \right) \times\\\rule{4.5cm}{0cm} \times \left( Ax+ \left( A+B
 \right) y-Bz+At \right)  \left( By-Bz+At \right) 

$1234$, $1256$, $1278$, $1357$, $1468$, $2358$, $2467$, $3456$

(183)(57)

C_2\oplus C_2\oplus C_2

{(15)(47)(68),(15)(23),(23)(48)(67)}

{\hbox{1235 -- 4678,}\;\;}

{\inf:\nc,\qdd 0:\nc,\qdd -1:\nc,\qdd -2:\arr241,\qdd }

$p^0_4:$  
1235, 1278, 1346, 2348, 2367, 2456, 3578, 4678

\ELLF\qquad\(\displaystyle
\begin{array}[t]{cccccc}
  \infty&0&1&-1&\frac {B}{A+B}&\frac {A+B}{B}\\[1mm]
\hline\wl2 &\wl2& \wl1 &\wl1&-&- \\[1mm]
\wl1 &\wl1&\wl2& - &\wl1&\wl1 \\[-2mm]
\multicolumn{2}{c}{\rule{.5pt}{2mm}\rule{1.3cm}{.5pt}\rule{.5pt}{2mm}}
&&&\multicolumn{2}{c}{\rule{.5pt}{2mm}\rule{1.3cm}{.5pt}\rule{.5pt}{2mm}}\\
\end{array}
\)\\

\AAA 250

xyzt \left( x+y+z \right)  \left( y+z-t \right)  \left( x+z+t \right) 
 \left( Ax+By-Az+At \right) 

$1234$, $1256$, $1278$, $1357$, $1368$, $2358$, $2467$, $5678$

(173)(458)

C_2

{(16)(45)}

{\hbox{1456 -- 2378,}\;\;}

{\inf:\arr69,\qdd 0:\nc,\qdd 1:\arr245,\qdd -1:\arr93,\qdd\\\rule{22mm}{0cm} -1/2:\arr240,\qdd }

$p^0_4:$  
1235, 1268, 1347, 1456, 2346, 2378, 2457, 3567

\ELLF\qquad\(\displaystyle
\begin{array}[t]{cccccc}
  \infty&0&1&-1&-\frac {B}{A}&-\frac {B}{2A+B}\\[1mm]
\hline\wl2 &\wl2& \wl1 &\wl1&-&- \\[1mm]
\wl1 &\wl1&\wl1& \wl1 &\wl1&\wl1 \\[-2mm]
&\multicolumn{2}{c}{\rule{.5pt}{2mm}\rule{1.3cm}{.5pt}\rule{.5pt}{2mm}}
&\multicolumn{2}{c}{\rule{.5pt}{2mm}\rule{1.3cm}{.5pt}\rule{.5pt}{2mm}}\\[-4mm]
\multicolumn{6}{c}{\rule{.5pt}{3mm}\rule{6.7cm}{.5pt}\rule{.5pt}{3mm}}\\
\end{array}
\)\\

\AAA 251

xyzt \left( x+y+z \right)  \left( x+z-t \right)\times\\\rule{4cm}{0cm} \times  \left( Ax+ \left( A+B
 \right) y-Bz+Bt \right)  \left( Ax-By-Bz-At \right) 

$1234$, $1256$, $1278$, $1357$, $1368$, $2358$, $2467$, $4568$

(1846)

1,1

{}

{}

{\inf:\arr1,\qdd 0:\arr19,\qdd -1:\nc,\qdd -2:\arr93,\\\rule{22mm}{0mm}
  $-\frac{\sqrt5}2-\frac12$:\arr C,\qdd
  \mbox{$\frac{\sqrt5}2-\frac12$}:\arr C,\qdd }

$p^0_4:$  
1235, 1267, 1346, 1458, 2368, 2456, 2478, 3567

\ELLF{}

\AAA 252

xyzt \left( x+y+z \right)  \left( x+y+t \right)\times\\\rule{6cm}{0cm} \times  \left( Ax+2\,Ay-Bz+At
 \right)  \left( Ax-Bz-At \right) 

$1234, 1256, 1278, 1357, 1468, 3456, 3678, 4578$

(152643)(78)

C_2\oplus C_2

{(14)(26),(14)(26),(16)(24)(78)}

{\hbox{1246 -- 3578,}\;\;}

{\inf:\nc,\qdd 0:\nc,\qdd -1:\arr69,\qdd -1/2:\arr241,\qdd }

$p^0_4:$  
1235, 1246, 1348, 1678, 2367, 2478, 3456, 3578

\ELLF\qquad\(\displaystyle
\begin{array}[t]{cccccc}
  \infty&0&-1&-2&\frac {B}{A}&-\frac {2A+B}{A}\\[1mm]
\hline\wl2 &\wl1& \wl2 &\wl1&-&- \\[1mm]
\wl1 &\wl1&\wl1& \wl1 &\wl1&\wl1 \\[-2mm]
\multicolumn{3}{c}{\rule{.5pt}{2mm}\rule{2.6cm}{.5pt}\rule{.5pt}{2mm}}
&\multicolumn{3}{c}{\rule{.5pt}{2mm}\rule{2.8cm}{.5pt}\rule{.5pt}{2mm}}\\[-4mm]
&\multicolumn{4}{c}{\rule{.5pt}{3mm}\rule{4cm}{.5pt}\rule{.5pt}{3mm}}\\
\end{array}
\)\\

\AAA 253

xyzt \left( x+y+z \right)  \left( x+z-t \right)\times\\\rule{4cm}{0cm} \times  \left( Ax+ \left( A+B
 \right) y-Bz+Bt \right)  \left( Ax+Ay-Bz-At \right) 

$1234$, $1256$, $1278$, $1357$, $1368$, $2358$, $2457$, $3467$ 

(18476)(23)

1,1

{}

{\hbox{1267 -- 3458,}\;\;}

{\inf:\arr3,\qdd 0:\nc,\qdd -1:\nc,\qdd -2:\arr240,\qdd\\\rule{22mm}{0mm}
  -1/2:\arr245, \qdd \mbox{$\frac{\sqrt5}2-\frac32$}:\arr C,\qdd
  $-\frac{\sqrt5}2-\frac32$:\arr C,\qdd }

$p^0_4:$  
1235, 1267, 1346, 2368, 2456, 2478, 3458, 3567

\ELLF\qquad\(\displaystyle
\begin{array}[t]{cccccc}
  \infty&0&1&-\frac AB&-\frac {A+B}{B}&\frac {2A+B}{A}\\[1mm]
\hline\wl2 &\wl1& \wl1 &\wl1&\wl1&- \\[-2mm]
&\multicolumn{3}{c}{\rule{.5pt}{2mm}\rule{2.7cm}{.5pt}\rule{.5pt}{2mm}}\\[-4mm]
&&\multicolumn{3}{c}{\rule{.5pt}{3mm}\rule{2.7cm}{.5pt}\rule{.5pt}{3mm}}\\
\wl1 &\wl1&\wl2& \wl1 &-&\wl1 \\[-2mm]
\multicolumn{4}{c}{\rule{.5pt}{2mm}\rule{4cm}{.5pt}\rule{.5pt}{2mm}}\\[-4mm]
&\multicolumn{5}{c}{\rule{.5pt}{3mm}\rule{5.3cm}{.5pt}\rule{.5pt}{3mm}}\\
\end{array}
\)\\

\AAA 254

xyzt \left( x+y+z+t \right)  \left( Ax+Ay-Bz-Bt \right)\times\\\rule{6cm}{0cm} \times  \left( Ay-Bz+
At \right)  \left( Ax-By-Bz \right) 

$1234, 1256, 1278, 1357, 1468, 2358, 3456, 3678$

(265)(384)

C_2

{(14)(23)(56)(78)}

{\hbox{1357 -- 2468,}\;\;}

{\inf:\nc,\qdd 0:\nc,\qdd -1:\nc,\qdd -2:\arr241,\\\rule{22mm}{0mm}
  $\frac{\sqrt5}2-\frac32$:\arr C,\qdd
  \mbox{$-\frac{\sqrt5}2-\frac32$}:\arr C,\qdd }

$p^0_4:$  
1238, 1256, 1357, 1458, 1467, 2347, 2468, 3456

\ELLF\qquad\(\displaystyle
\begin{array}[t]{cccccc}
  \infty&0&-1&\frac BA&\frac {B^{2}}{A(A+2B)}&-\frac {A+2B}{B}\\[1mm]
\hline\wl1 &\wl1& \wl1 &\wl2&\wl1&- \\[-2mm]
&\multicolumn{2}{c}{\rule{.5pt}{2mm}\rule{1.3cm}{.5pt}\rule{.5pt}{2mm}}\\[-4mm]
\multicolumn{5}{c}{\rule{.5pt}{3mm}\rule{5.4cm}{.5pt}\rule{.5pt}{3mm}}\\
\wl1 &\wl1&\wl2& \wl1 &-&\wl1 \\[-2mm]
\multicolumn{4}{c}{\rule{.5pt}{2mm}\rule{4cm}{.5pt}\rule{.5pt}{2mm}}\\[-4mm]
&\multicolumn{5}{c}{\rule{.5pt}{3mm}\rule{5.3cm}{.5pt}\rule{.5pt}{3mm}}\\
\end{array}
\)\\

\AAA 255

xyzt \left( Ax+Ay+Bz+Bt \right)  \left( x+y-2\,z-2\,t \right) \times\\\rule{1cm}{0mm} \times \left( 
Ay+ \left( -2\,A+B \right) z+Bt \right) \left( Bx+ \left( -2\,A+B
 \right) y+ \left( 4\,A-2\,B \right) z-2\,Bt \right)

$1234$, $1256$, $1278$, $1357$, $1368$, $2457$, $3458$, $4678$ 

(15382746)

1,1

{}

{\hbox{1278 -- 3456,}\;\;}

{\inf:\nc,\qdd 0:\nc,\qdd 1/2:\arr32,\\\rule{22mm}{0mm} -1/2:\nc,\qdd
  $\frac{\sqrt5}4+\frac14$:\arr C,\qdd
  $-\frac{\sqrt5}4+\frac14$:\arr C,\qdd }

$p^0_4:$  
1256, 1278, 1357, 1468, 2347, 2368, 3456, 5678

\ELLF\qquad\(\displaystyle
\begin{array}[t]{cccccc}
  \infty&0&\frac12&-\frac AB&-\frac {2A-B}{2B}&\frac {A(2A-B)}{B^{2}}\\[1mm]
\hline\wl2 &\wl1& \wl1 &\wl1&\wl1&- \\[-2mm]
&&\multicolumn{2}{c}{\rule{.5pt}{2mm}\rule{1.3cm}{.5pt}\rule{.5pt}{2mm}}\\[-4mm]
&\multicolumn{4}{c}{\rule{.5pt}{3mm}\rule{4cm}{.5pt}\rule{.5pt}{3mm}}\\
\wl1 &\wl1&\wl1& \wl1 &\wl1&\wl1 \\[-2mm]
\multicolumn{2}{c}{\rule{.5pt}{2mm}\rule{1.3cm}{.5pt}\rule{.5pt}{2mm}}
&&\multicolumn{2}{c}{\rule{.5pt}{2mm}\rule{1.3cm}{.5pt}\rule{.5pt}{2mm}}\\[-4mm]
&&\multicolumn{4}{c}{\rule{.5pt}{3mm}\rule{4.4cm}{.5pt}\rule{.5pt}{3mm}\rule{4mm}{0mm}}\\
\end{array}
\)\\

\AAA 256

xyzt \left( x+y+2\,z \right)  \left( Ay-Bz+Bt \right)\times\\\rule{2cm}{0mm} \times  \left( Ax+Ay+
 \left( 2\,A-B \right) z+Bt \right)   \left( Bx+ \left( -2\,A+B
 \right) y+2\,Bz-2\,Bt \right) 

$1234$, $1256$, $1278$, $1357$, $1368$, $2358$, $2457$, $3456$

(17462)(35)

D_4

{(17)(25)(36)(48),(1847)(2536),(1847)(2536)}

{\hbox{1268 -- 3457,}\;\;\hbox{1367 -- 2458,}\;\;}

{\inf:\nc,\qdd 0:\nc,\qdd 1/2:\arr238,\qdd -1/2:\arr239,\\\rule{22mm}{0mm}
  $\frac{\sqrt{-3}}4+\frac14$:\arr B,\qdd
  $-\frac{\sqrt{-3}}4+\frac14$:\arr B,\qdd }

$p^0_4:$  
1235, 1268, 1367, 2346, 2458, 2567, 3457, 3568

\ELLF\qquad\(\displaystyle
\begin{array}[t]{cccccc}
  \infty&0&1&-\frac {2A}B&-\frac {2A-B}{B}&\frac {2A}{2A-B}\\[1mm]
\hline\wl2 &\wl1& \wl1 &\wl1&\wl1&- \\[-2mm]
&&\multicolumn{2}{c}{\rule{.5pt}{2mm}\rule{1.3cm}{.5pt}\rule{.5pt}{2mm}}\\[-4mm]
&\multicolumn{4}{c}{\rule{.5pt}{3mm}\rule{4cm}{.5pt}\rule{.5pt}{3mm}}\\
\wl1 &\wl2&\wl1& \wl1 &-&\wl1 \\[-2mm]
&&\multicolumn{2}{c}{\rule{.5pt}{2mm}\rule{1.3cm}{.5pt}\rule{.5pt}{2mm}}\\[-4mm]
\multicolumn{6}{c}{\rule{3mm}{0mm}\rule{.5pt}{3mm}\rule{7cm}{.5pt}\rule{.5pt}{3mm}\rule{4mm}{0mm}}\\
\end{array}
\)\\

\AAA 257

xyzt \left( x+y+2\,z+2\,t \right)  \left( Ax+Ay+Bz+Bt \right) \times\\\rule{2cm}{0mm} \times  \left( 
Ay+ \left( -2\,A+B \right) z+Bt \right)  \left(  \left( 2\,A-B
 \right) x-By+ \left( 4\,A-2\,B \right) z-2\,Bt \right) 

$1234$, $1256$, $1278$, $1357$, $1368$, $2457$, $3456$, $4678$

(15)(2748)(36)

C_2

{(23)(68)}

{\hbox{1278 -- 3456,}\;\;\hbox{1367 -- 2458,}\;\;}

{\inf:\nc,\qdd 0:\nc,\qdd 1/2:\nc,\qdd 1/4:\arr240,\\\rule{22mm}{0mm}
  $\frac{\sqrt{-3}}4+\frac14$:\arr B,\qdd
  $\frac{-\sqrt{-3}}4+\frac14$:\arr B,\qdd }

$p^0_4:$  
1256, 1278, 1358, 1367, 2347, 2458, 3456, 5678

\ELLF\qquad\(\displaystyle
\begin{array}[t]{cccccc}
  \infty&0&-1&\frac {2A-B}B&-\frac {2A}{B}&\frac {2A(2A-B)}{B^{2}}\\[1mm]
\hline\wl2 &\wl1& \wl1 &\wl1&\wl1&- \\[-2mm]
&\multicolumn{2}{c}{\rule{.5pt}{2mm}\rule{1.3cm}{.5pt}\rule{.5pt}{2mm}}
&\multicolumn{2}{c}{\rule{.5pt}{2mm}\rule{1.3cm}{.5pt}\rule{.5pt}{2mm}}\\
\wl1 &\wl1&\wl1& \wl1 &\wl1&\wl1 \\[-2mm]
\multicolumn{2}{c}{\rule{.5pt}{2mm}\rule{1.3cm}{.5pt}\rule{.5pt}{2mm}}
&&\multicolumn{2}{c}{\rule{.5pt}{2mm}\rule{1.3cm}{.5pt}\rule{.5pt}{2mm}}\\[-4mm]
&&\multicolumn{4}{c}{\rule{.5pt}{3mm}\rule{4.6cm}{.5pt}\rule{.5pt}{3mm}\rule{4mm}{0mm}}\\
\end{array}
\)\\

\AAA 258

xyzt \left( x-y+2\,z-2\,t \right)  \left( y-z+2\,t \right) \times\\\rule{6cm}{0mm} \times  \left( x-y
+z-t \right)  \left( Ax+By+Az+Bt \right) 

$1234, 1256, 1278, 1357, 1368, 1458, 2367, 4567$

(1367)(245)

C_2

{(15)(36)}

{\hbox{1356 -- 2478,}\;\;}

{\inf:\arr32,\qdd 0:\arr93,\\\rule{22mm}{0mm} -1:\nc,\qdd -2:\arr245,\qdd -1/2:\arr240,\qdd }

$p^0_4:$  
1257, 1356, 1378, 1467, 2346, 2478, 3457, 5678

\ELLF\qquad\(\displaystyle
\begin{array}[t]{cccccc}
  \infty&0&1&\frac {1}2&-\frac {A}{B}&-\frac {A}{2B}\\[1mm]
\hline\wl2 &\wl1& \wl1 &\wl2&-&- \\[1mm]
\wl1 &\wl1&\wl1& \wl1 &\wl1&\wl1 \\[-2mm]
\multicolumn{2}{c}{\rule{.5pt}{2mm}\rule{1.3cm}{.5pt}\rule{.5pt}{2mm}}
&&\multicolumn{2}{c}{\rule{.5pt}{2mm}\rule{1.3cm}{.5pt}\rule{.5pt}{2mm}}\\[-4mm]
&&\multicolumn{4}{c}{\rule{.5pt}{3mm}\rule{4.cm}{.5pt}\rule{.5pt}{3mm}}\\
\end{array}
\)\\

\AAA 259

xyzt \left( x+y+z+t \right)  \left( x-y-z+t \right) \times\\\rule{5cm}{0mm} \times  \left( Ax-Ay+Bz-B
t \right)  \left( Ax-By+Az-Bt \right) 

$1234$, $1256$, $1278$, $1357$, $1368$, $2457$, $3458$, $3467$

(1825)(3746)

C_2\oplus C_2

{(14)(56)(78),(14)(23)}

{\hbox{1456 -- 2378,}\;\;}

{\inf:\arr32,\qdd 0:\arr32,\qdd 1:\nc,\qdd -1:\nc,\qdd }

$p^0_4:$  
1267, 1358, 1456, 2356, 2378, 2458, 3467, 5678

\ELLF\qquad\(\displaystyle
\begin{array}[t]{cccccc}
  \infty&0&1&-1&\frac {A}{B}&-\frac {A}{B}\\[1mm]
\hline\wl2 &-& \wl1 &\wl1&\wl1&\wl1 \\[-2mm]
&&\multicolumn{2}{c}{\rule{.5pt}{2mm}\rule{1.3cm}{.5pt}\rule{.5pt}{2mm}}
&\multicolumn{2}{c}{\rule{.5pt}{2mm}\rule{1.3cm}{.5pt}\rule{.5pt}{2mm}}\\[1mm]
\wl1 &\wl1&\wl1& \wl1 &\wl1&\wl1 \\[-2mm]
\multicolumn{2}{c}{\rule{.5pt}{2mm}\rule{1.3cm}{.5pt}\rule{.5pt}{2mm}}
&\multicolumn{3}{c}{\rule{.5pt}{2mm}\rule{2.7cm}{.5pt}\rule{.5pt}{2mm}}\\[-4mm]
&&&\multicolumn{3}{c}{\rule{.5pt}{3mm}\rule{2.7cm}{.5pt}\rule{.5pt}{3mm}}\\
\end{array}
\)\\

\AAA 261

xyzt \left( x+y+z+t \right)  \left( x-y-z+t \right) \times\\\rule{5cm}{0mm} \times  \left( Ax-Ay+Bz-B
t \right)  \left( Ax+Ay+Bz+Bt \right) 

$1234$, $1256$, $1357$, $1467$, $2358$, $2468$, $3678$, $4578$

(16245)(78)

D_4\oplus C_2

{(14)(23),(56)(78),(1324)(58)(67)}

{\hbox{1258 -- 3467,}\;\;\hbox{1267 -- 3458,}\;\;}

{\inf:\nc,\qdd 0:\nc,\qdd 1:\nc,\qdd -1:\nc,\qdd }

$p^0_4:$  
1258, 1267, 1378, 1456, 2356, 2478, 3458, 3467

\ELLF\qquad\(\displaystyle
\begin{array}[t]{cccccc}
  \infty&0&1&-1&\frac {A}{B}&-\frac {A}{B}\\[1mm]
\hline\wl1 &\wl1& \wl1 &\wl1&\wl1&\wl1 \\[-2mm]
\multicolumn{2}{c}{\rule{.5pt}{2mm}\rule{1.3cm}{.5pt}\rule{.5pt}{2mm}}
&&\multicolumn{2}{c}{\rule{.5pt}{2mm}\rule{1.3cm}{.5pt}\rule{.5pt}{2mm}}\\[-4mm]
&&\multicolumn{4}{c}{\rule{.5pt}{3mm}\rule{4cm}{.5pt}\rule{.5pt}{3mm}}\\
\wl1 &\wl1&\wl1& \wl1 &\wl1&\wl1 \\[-2mm]
\multicolumn{2}{c}{\rule{.5pt}{2mm}\rule{1.3cm}{.5pt}\rule{.5pt}{2mm}}
&\multicolumn{3}{c}{\rule{.5pt}{2mm}\rule{2.7cm}{.5pt}\rule{.5pt}{2mm}}\\[-4mm]
&&&\multicolumn{3}{c}{\rule{.5pt}{3mm}\rule{2.7cm}{.5pt}\rule{.5pt}{3mm}}\\
\end{array}
\)\\

\AAA 262

xyzt \left( x-z-t \right)  \left( Ax+Ay+Bz \right) \times\\\rule{3cm}{0mm} \times  \left( Ax+ \left( 
A+B \right) y-Az+Bt \right)  \left( By+ \left( -A-B \right) z-At
 \right) 

$1234$, $1256$, $1278$, $1357$, $1468$, $2358$, $3467$, $4578$

(163485)(27)

1,1

{}

{}

{\inf:\arr1,\qdd 0:\nc,\qdd -1:\nc,\qdd
  $\frac{\sqrt{-3}}2-\frac12$:\arr A,\\\rule{22mm}{0mm}
  $-\frac{\sqrt{-3}}2-\frac12$:\arr A,\qdd }

$p^0_4:$  
1236, 1345, 1578, 2348, 2457, 2568, 3567, 4678

\ELLF{}

\AAA 264

xyzt \left( y-2\,z+2\,t \right)  \left( Ax+Ay+Bz \right)\times\\\rule{1cm}{0mm} \times   \left( Ax+2
\,Ay+ \left( -2\,A+B \right) z+ \left( 2\,A-B \right) t \right) 
 \left( Ax+Ay-2\,Az+ \left( 2\,A-B \right) t \right) 

$1234$, $1256$, $1278$, $1357$, $2468$, $3456$, $3678$, $4578$

(1384275)

D_4

{(1678)(24)(35),(17)(25)(34),(25)(34)(68)}

{\hbox{1257 -- 3468,}\;\;\hbox{1678 -- 2345,}\;\;}

{\inf:\nc,\ \  0:\nc,\ \  1/2:\nc,\ \ -1/2:\nc,\qdd }

$p^0_4:$  
1236, 1257, 1458, 1678, 2345, 2378, 3468, 4567

\ELLF\qquad\(\displaystyle
\begin{array}[t]{cccccc}
  \infty&0&-\frac12&\frac{A}{B}&-\frac {2A}{2A-B}&-\frac {A}{2A-B}\\[1mm]
\hline\wl1 &\wl1& \wl1 &\wl1&\wl1&\wl1 \\[-2mm] 
\multicolumn{4}{c}{\rule{.5pt}{2mm}\rule{4.2cm}{.5pt}\rule{.5pt}{2mm}}\\[-4mm]
&\multicolumn{4}{c}{\rule{.5pt}{3mm}\rule{4.2cm}{.5pt}\rule{.5pt}{3mm}}\\[-4mm]
&&\multicolumn{4}{c}{\rule{.5pt}{4mm}\rule{4.2cm}{.5pt}\rule{.5pt}{4mm}}\\ 
\wl1 &\wl1&\wl1&\wl1&\wl1&\wl1\\[-2mm]
\multicolumn{3}{c}{\rule{.5pt}{2mm}\rule{2.6cm}{.5pt}\rule{.5pt}{2mm}}
&\multicolumn{3}{c}{\rule{.5pt}{2mm}\rule{3cm}{.5pt}\rule{.5pt}{2mm}}
\\[-4mm]
&\multicolumn{4}{c}{\rule{.5pt}{3mm}\rule{4.cm}{.5pt}\rule{.5pt}{3mm}}\\ 
\end{array}
\)\\[5mm]
\rule{32mm}{0mm}\textbf{and}\qquad\(\displaystyle
\begin{array}[t]{cccccc}
  \infty&0&1&\frac{2A+B}{2A-B}&\frac {2A}{2A-B}&\frac {B^{2}+4A^{2}}{2A(2A-B)}\\[1mm]
\hline\wl2&- &\wl1& \wl1 &\wl2&- \\[1mm]
\wl1 &\wl1&\wl1& \wl1 &\wl1&\wl1 \\[-2mm]
&&\multicolumn{2}{c}{\rule{.5pt}{2mm}\rule{1.3cm}{.5pt}\rule{.5pt}{2mm}}\\[-4mm]
\multicolumn{5}{c}{\rule{.5pt}{3mm}\rule{5.4cm}{.5pt}\rule{.5pt}{3mm}}\\[-4mm]
&\multicolumn{5}{c}{\rule{.5pt}{4mm}\rule{5.4cm}{.5pt}\rule{.5pt}{4mm}}
\end{array}
\)\\

\AAA 265

xyzt \left( x+y-z+2\,t \right)  \left( Ax+2\,Ay-Az+Bt \right)\times\\\rule{4cm}{0mm} \times   \left( 
By-2\,Az+2\,Bt \right)  \left( Bx+By+ \left( 2\,A-B \right) z \right) 

$1234, 1256, 1278, 1357, 1468, 2358, 2467, 3478$

(16587)(34)

C_2\oplus C_2

{(27)(58),(16)(27)(34)}

{\hbox{1267 -- 3458,}\;\;}

{\inf:\nc,\qdd 0:\nc,\qdd 1/2:\nc,\qdd 1/4:\arr69,\qdd }

$p^0_4:$  
1238, 1267, 1357, 2347, 2456, 2578, 3458, 4678

\ELLF\qquad\(\displaystyle
\begin{array}[t]{cccccc}
  \infty&0&-\frac12&\frac{A}{2A-B}&-\frac {2A}{B}&-\frac {A}{B}\\[1mm]
\hline\wl1 &\wl1& \wl1 &\wl1&\wl1&\wl1 \\
[-2mm]
\multicolumn{4}{c}{\rule{.5pt}{2mm}\rule{4cm}{.5pt}\rule{.5pt}{2mm}}\\[-4mm]
&\multicolumn{4}{c}{\rule{.5pt}{3mm}\rule{4cm}{.5pt}\rule{.5pt}{3mm}}\\[-4mm]
&&\multicolumn{4}{c}{\rule{.5pt}{4mm}\rule{4cm}{.5pt}\rule{.5pt}{4mm}}\\[1mm]
\wl1 &\wl2&\wl1& \wl1 &-&\wl1 \\[-2mm]
&&\multicolumn{2}{c}{\rule{.5pt}{2mm}\rule{1.3cm}{.5pt}\rule{.5pt}{2mm}}\\[-4mm]
\multicolumn{6}{c}{\rule{.5pt}{3mm}\rule{6.7cm}{.5pt}\rule{.5pt}{3mm}}\\
\end{array}
\)\\

\AAA 266

xyzt \left( y-2\,z+2\,t \right)  \left( 2\,x+y+2\,t \right) \times\\\rule{4cm}{0mm} \times  \left( Ax
+By+Az \right)  \left( Ax+ \left( A+B \right) y-Az+At \right) 

$1234$, $1256$, $1278$, $1357$, $1368$, $1458$, $2358$, $2467$

(152)(68)

S_3

{(163)(487),(36)(47)}

{}

{\inf:\arr19,\qdd 0:\nc,\qdd 2:\arr245,\qdd -1:\arr245,\\\rule{22mm}{0mm} -2:\arr19,\qdd -4:\arr245,\qdd }

$p^0_4:$  
1237, 1246, 1258, 1356, 2345, 2368, 2567, 4578

\ELLF{}

\AAA 267

xyzt \left( Ax+Ay+ \left( -A+B \right) z \right)  \left( Ax+By-Az+At
 \right) \times\\\rule{4cm}{0mm} \times \left(  \left( -A+B \right) y-Bz+Bt \right)   \left( Bx+By-Az
+Bt \right) 

$1234$, $1256$, $1278$, $1357$, $2468$, $3458$, $3678$, $4567$

(147853)

S_3

{(176)(485),(15)(23)(46)(78)}

{\hbox{1267 -- 3458,}\;\;}

{\inf:\nc,\qdd 0:\nc,\qdd 1:\nc,\\ \rule{22mm}{0mm} $\frac{\sqrt{-3}}2+\frac12$:\nc,\qdd $-\frac{\sqrt{-3}}2+\frac12$:\nc,\qdd }

$p^0_4:$  
1235, 1267, 1468, 1578, 2347, 2368, 3458, 4567

\ELLF\qquad\(\displaystyle
\begin{array}[t]{cccccc}
  \infty&0&-1&\frac{A-B}{B}&-\frac {A}{A-B}&-\frac {B}{A}\\[1mm]
\hline\wl1 &\wl1& \wl1 &\wl1&\wl1&\wl1 \\[-2mm] 
\multicolumn{5}{c}{\rule{.5pt}{2mm}\rule{5.5cm}{.5pt}\rule{.5pt}{2mm}}\\[-4mm]
&\multicolumn{5}{c}{\rule{.5pt}{3mm}\rule{5.5cm}{.5pt}\rule{.5pt}{3mm}}\\[-4.5mm]
&&\multicolumn{2}{c}{\rule{.5pt}{4mm}\rule{1.3cm}{.5pt}\rule{.5pt}{4mm}}\\ 
\wl1 &\wl1&\wl1&\wl1&\wl1&\wl1\\[-2mm]
&\multicolumn{3}{c}{\rule{.5pt}{2mm}\rule{2.8cm}{.5pt}\rule{.5pt}{2mm}}\\[-4mm]
&&\multicolumn{3}{c}{\rule{.5pt}{3mm}\rule{2.8cm}{.5pt}\rule{.5pt}{3mm}}
\\[-4.5mm]
\multicolumn{6}{c}{\rule{.5pt}{4mm}\rule{6.8cm}{.5pt}\rule{.5pt}{4mm}}\\ 
\end{array}
\)\\

\AAA 268

xyzt \left( x+y+z \right)  \left( Ay-2\,Bz+2\,Bt \right)\times\\\rule{3cm}{0mm} \times  \left( 2\,Bx
+2\,By+At \right)  \left(  \left( -A+2\,B \right) x+2\,By-Az+At
 \right) 

$1234, 1256, 1278, 1357, 1468, 2358, 3467, 5678$

(12)(38)(46)(57)

1,1

{}

{\hbox{1268 -- 3457,}\;\;}

$p^0_4:$  
1235, 1247, 1268, 1378, 2346, 2578, 3457, 4568

\ELLF{\inf:\nc,\qdd 0:\nc,\qdd 2:\nc,\qdd -2:\arr69,\\\rule{22mm}{0mm}
  $\sqrt5-1$:\arr C,\qdd $-\sqrt5-1$:\arr C,\qdd }

\qquad\(\displaystyle
\begin{array}[t]{cccccc}
  \infty&0&1&\frac{A-2B}{A}&-\frac {A}{2B}&-\frac {2B}{A}\\[1mm]
\hline\wl1 &\wl1& \wl1 &\wl1&\wl1&\wl1 \\[-2mm] 
\multicolumn{3}{c}{\rule{.5pt}{2mm}\rule{2.8cm}{.5pt}\rule{.5pt}{2mm}}
&\multicolumn{2}{c}{\rule{.5pt}{2mm}\rule{1.3cm}{.5pt}\rule{.5pt}{2mm}}\\[-4mm]
&\multicolumn{5}{c}{\rule{.5pt}{3mm}\rule{5.3cm}{.5pt}\rule{.5pt}{3mm}}\\ 
\wl2 &\wl1&\wl1&\wl1&-&\wl1\\[-2mm]
&\multicolumn{3}{c}{\rule{.5pt}{2mm}\rule{2.8cm}{.5pt}\rule{.5pt}{2mm}}\\[-4mm]
&&\multicolumn{4}{c}{\rule{.5pt}{3mm}\rule{4cm}{.5pt}\rule{.5pt}{3mm}}
\\[-4.5mm]
\end{array}
\)\\

\AAA 270

xyzt \left( x+y+z \right)  \left( y+z+t \right)\times\\\rule{4cm}{0mm} \times  \left( Ax+2\,Ay-Bz+At
 \right)  \left( Bx-2\,Ay+Bz+Bt \right) 

$1234, 1256, 1278, 1357, 1368, 2458, 3467, 5678$

(182)(4567)

C_4

{(1645)(38)}

{\hbox{1456 -- 2378,}\;\;}

{\inf:\nc,\qdd 0:\nc,\qdd -1:\nc,\\\rule{22mm}{0mm} -1/2:\nc,\qdd }

$p^0_4:$  
1235, 1268, 1456, 1478, 2346, 2378, 2458, 3567

\ELLF\qquad\(\displaystyle
\begin{array}[t]{cccccc}
  \infty&0&-1&-2&\frac {B}{A}&\frac {2A}{B}\\[2mm]
\hline\wl2 &\wl1&\wl2&\wl1&-&-\\[2mm]
\wl1 &\wl1& \wl1 &\wl1&\wl1&\wl1 \\[-2mm] 
\multicolumn{2}{c}{\rule{.5pt}{2mm}\rule{1.3cm}{.5pt}\rule{.5pt}{2mm}}
&\multicolumn{2}{c}{\rule{.5pt}{2mm}\rule{1.3cm}{.5pt}\rule{.5pt}{2mm}}
&\multicolumn{2}{c}{\rule{.5pt}{2mm}\rule{1.3cm}{.5pt}\rule{.5pt}{2mm}}\\ 
\end{array}
\)\\

\AAA 273

xyzt \left( x+y+z \right)  \left( 2\,y+2\,z+t \right)\times\\\rule{6cm}{0mm} \times  \left( 2\,x-2\,
z-t \right)  \left( Ax+2\,By-Az+Bt \right) 

$1234, 1256, 1278, 1357, 1368, 2358, 2367, 4567$

(1258476)

S_3

{(16)(27),(163)(247)}

{}

{\inf:\arr19,\qdd 0:\arr19,\qdd 2:\arr245,\qdd -1:\arr245,\\\rule{22mm}{0mm} -2:\arr19,\qdd -4:\arr245,\qdd }

$p^0_4:$  
1235, 1267, 1347, 1368, 1456, 2346, 2478, 3567

\ELLF{}

\AAA 274

xyzt \left( x+y+z \right)  \left( x+z-t \right)\times\\\rule{3cm}{0mm} \times  \left( Ax+ \left( A+B
 \right) y-Bz+Bt \right)  \left( Ax+Ay-Bz+ \left( A+B \right) t
 \right) 

$1234, 1256, 1278, 1357, 1368, 2358, 2457, 4678$

(13528476)

C_2

{(13)(24)(56)(78)}

{\hbox{1267 -- 3458,}\;\;}

{\inf:\arr3,\qdd 0:\arr3,\qdd -1:\nc,\qdd -2:\arr245,\\\rule{22mm}{0mm}
  -1/2:\arr245, \qdd
  $\frac{\sqrt{-3}}2-\frac12$:\arr B,\qdd
  $-\frac{\sqrt{-3}}2-\frac12$:\arr B,\qdd } 

$p^0_4:$  
1235, 1267, 1346, 1568, 2456, 2478, 3458, 3567

\ELLF\qquad\(\displaystyle
\begin{array}[t]{cccccc}
  \infty&0&1&-\frac{A}{B}&-\frac {A+B}{B}&-\frac {A}{A+B}\\[1mm]
\hline\wl2 &\wl1& \wl1 &\wl1&\wl1&-\\[-2mm] 
&\multicolumn{3}{c}{\rule{.5pt}{2mm}\rule{2.7cm}{.5pt}\rule{.5pt}{2mm}}\\[-4mm]
&&\multicolumn{3}{c}{\rule{.5pt}{3mm}\rule{2.7cm}{.5pt}\rule{.5pt}{3mm}}\\ 
\wl1 &\wl2&\wl1&\wl1&-&\wl1\\[-2mm]
\multicolumn{4}{c}{\rule{.5pt}{2mm}\rule{4cm}{.5pt}\rule{.5pt}{2mm}}\\[-4mm]
&&\multicolumn{4}{c}{\rule{.5pt}{3mm}\rule{4cm}{.5pt}\rule{.5pt}{3mm}}
\\[-4.5mm]
\end{array}
\)\\

\AAA 275

xyzt \left( Bx+ \left( A+B \right) y+ \left( A-B \right) z \right) 
 \left( 2\,Bx+ \left( 2\,A+2\,B \right) z+Bt \right)\times\\\rule{1mm}{0mm} \times \left( 8\,By+
 \left( 4\,A-4\,B \right) z+ \left( A-B \right) t \right)  \left( 
 \left( 2\,A-2\,B \right) x+ \left( -4\,A-4\,B \right) y+ \left( A+B
 \right) t \right) 

$1234$, $1256$, $1357$, $1467$, $2358$, $2478$, $3678$, $4568$

(123584)

C_6

{(173826)(45)}

{\hbox{1235 -- 4678,}\;\;}

{\inf:\nc,\qdd 1:\nc,\qdd -1:\nc,\\\rule{22mm}{0mm} $\sqrt{-3}$:\nc,\qdd $-\sqrt{-3}$:\nc,\qdd }

$p^0_4:$  
1235, 1248, 1346, 1578, 2347, 2567, 3568, 4678

\ELLF\qquad\(\displaystyle
\begin{array}[t]{cccccc}
  \infty&0&-\frac{2B}{A-B}&-\frac {2B}{A+B}&\frac {B^{2}}{(A-B)^{2}}&-\frac{A+B}{A-B}\\[1mm]
\hline\wl1 &\wl1& \wl1 &\wl1&\wl1&\wl1\\[-2mm] 
\multicolumn{4}{c}{\rule{.5pt}{2mm}\rule{4cm}{.5pt}\rule{.5pt}{2mm}}\\[-4mm]
&\multicolumn{4}{c}{\rule{.5pt}{3mm}\rule{4cm}{.5pt}\rule{.5pt}{3mm}}\\[-4mm]
&&\multicolumn{4}{c}{\rule{.5pt}{4mm}\rule{4cm}{.5pt}\rule{.5pt}{4mm}}\\ 
\wl1 &\wl1&\wl1&\wl1&\wl1&\wl1\\[-2mm]
&&\multicolumn{2}{c}{\rule{.5pt}{2mm}\rule{1,3cm}{.5pt}\rule{.5pt}{2mm}}\\[-4mm]
\multicolumn{5}{c}{\rule{.5pt}{3mm}\rule{5.4cm}{.5pt}\rule{.5pt}{3mm}}\\[-4mm]
&\multicolumn{5}{c}{\rule{.5pt}{4mm}\rule{5.4cm}{.5pt}\rule{.5pt}{4mm}}\\
\end{array}
\)\\

\AAA 276

xyzt (x+z+t)(Ax+By+Bt)((-A+B)x-By-Az)(-Ay-Az+(-A+B)t)

$1234$, $1256$, $1357$, $1468$, $2358$, $2478$, $3678$, $4567$

(1738)(265)

C_8

{(15274638)}

{}

{\inf:\nc,\qdd 0:\nc,\qdd 1:\nc,\qdd -1:\nc }

$p^0_4:$  
1237, 1246, 1345, 1678, 2348, 2568, 3567, 4578

\ELLF{}

\AAA D

xyzt (x+y+z+t)\left((\sqrt{-3}+1)x+(\sqrt{-3}+1)y+2z\right)
\left(Ax+(1-\sqrt{-3})By+2Bt\right)  \times\\\rule{5mm}{0mm} \times
\left(((1-\sqrt{-3})A-(1-\sqrt{-3})^2B)x+4Bz+4Bt\right)

$1234$, $1256$, $1278$, $1357$, $1468$, $3458$, $3678$, $4567$

(384675)

S_3

{(37)(46)(58),(35)(47)(68)}

{}

{\inf: \nc, \qdd 0:\arr A, \qdd 2:\arr A, \qdd $1-\sqrt{-3}$:\nc}

$p^0_4:$  
1236, 1247, 1258, 1348, 1567, 3456, 3578, 4678

\ELLF{}

\leavevmode
\subsection{Applications}

Birational transformation between two versions of the surfaces of type
$S_{3}$ described in the section \ref{sec:birat} was used in \cite{CM}
to prove that the double  octic Calabi--Yau threefolds  no. 32 and 69
are birational. Using this birational transformation and a similar
transformation for surface $S_{6}$ together with the elliptic surface
fibration we found thirteen pairs of birational double octics.

Self isogeny of surface of type $S_{2}$ (swapping $I_{2}$ and $I_{4}$
fibers) and quadratic pullbacks of $S_{1}$, $S_{3}$ and $S_{4}$ gives
several examples of correspondences between double octics.

\def\BBB#1 #2 {\mbox{$\left(#1, \ #2\right)$}}
\begin{theorem}
  The following pairs of double octic Calabi--Yau threefolds are
  birational
  \BBB 32 69 , \BBB 10 16 , \BBB 21 53 , \BBB 33 70 , \BBB 36 73 ,
  \BBB 96 100 , \BBB 97  98 , \BBB 153 197 , \BBB 250 258 , \BBB 259
  265 , \BBB 255 268 , \BBB 261 264 , \BBB 267 275 .
\end{theorem}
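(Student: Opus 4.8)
The plan is to realize both members of each pair as fiber-wise Kummer constructions and then to pass from one to the other by the explicit birational maps of the elliptic surfaces $S_{3}$ and $S_{6}$ recorded in Section~\ref{sec:birat}. Recall that an octic arrangement with two opposite fourfold points gives a double octic birational to a fiber product $S\times_{\PP^{1}}S'$ of two rational elliptic surfaces over the common base $\PP^{1}$ obtained by projecting from the line $AB=\{z=t=0\}$, so that the base coordinate is $[z:t]$, and that the matching of singular fibers is read off from the incidence table: a fiber $l_{ij}$ of $S$ and a fiber $m_{kl}$ of $S'$ sit over the same point exactly when the four planes $P_{i},P_{j},P_{k+4},P_{l+4}$ meet. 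Thus the entire Kummer fibration is encoded by the two fibration types together with this matching, all of which are displayed in the \textbf{Elliptic fibrations} data preceding this subsection.

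First I would run through the thirteen pairs and extract, for each member, a fiber product presentation in which one distinguished factor $S$ is of type $S_{3}$ or $S_{6}$, while the complementary factor $S'$ and the singular-fiber matching are the same for the two members. Reading off the fibration tables, eight pairs are of this kind with an $S_{3}$ factor --- for instance $(32,69)$ is $S_{2}\times S_{3}$ in both cases, $(21,53)$ is $S_{3}\times S_{3}$, $(10,16)$ is $S_{3}\times S_{4}$, and $(36,73),(96,100)$ are $S_{5}\times S_{3}$ --- while the remaining five, namely $(250,258),(259,265),(255,268),(261,264),(267,275)$, carry an $S_{6}$ factor (paired with $S_{2}$, $S_{5}$ or $S_{6}$ respectively). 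In every pair the two members differ only by which of the two geometric realizations of the distinguished $S_{3}$ (resp. $S_{6}$) factor occurs.

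The decisive point is that the explicit transformation
\[(u,x,z,t)\mapsto(\lambda zt\,u,\lambda zt,xz,xt)\]
of $S_{3}$, and likewise the displayed map of $S_{6}$, covers the identity on the base $\PP^{1}$, since its image satisfies $[\widetilde z:\widetilde t]=[xz:xt]=[z:t]$. Taking the fiber product of this map with the identity on the unchanged factor $S'$ therefore produces a birational map $S\times_{\PP^{1}}S'\dashrightarrow\widetilde S\times_{\PP^{1}}S'$ between the two Kummer fibrations. As it only rescales the square-root coordinate $u$ and leaves $v$ untouched, it commutes with the deck involution $(u,v)\mapsto(-u,-v)$ of the generically $2:1$ cover of Section~\ref{sec:birat}, and hence descends to a birational map of the two double octics. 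Carried out for all thirteen pairs --- the case $(32,69)$ reproving the result of \cite{CM} --- this yields the stated birational equivalences.

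The main obstacle is the verification underlying the second step: one must check, pair by pair, that after removing the distinguished $S_{3}$ (or $S_{6}$) factor the remaining data genuinely agree. Concretely this means confirming that the complementary factors are the same rational elliptic surface --- for the $S_{2}$ factors, for example, that the two harmonic quadruples of singular fibers are projectively identified --- and, more delicately, that the singular-fiber matchings coincide under the base identification imposed by the birational map. For the $S_{6}$ pairs this requires tracking how the three conjugate pairs of $I_{2}$ fibers are matched with the fibers of the second factor, since the two realizations of $S_{6}$ differ precisely by a reordering of those conjugate pairs; the markings of conjugate pairs recorded in the $S_{5}$ and $S_{6}$ tables are exactly what make this bookkeeping routine. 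This combinatorial matching, rather than any deeper geometric input, is where the work of the proof lies.
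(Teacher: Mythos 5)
Your proposal is correct and follows essentially the same route as the paper: the authors prove the theorem exactly by presenting each member of a pair as a fiber-wise Kummer construction (a fiber product of rational elliptic surfaces with matching of singular fibers read off the incidence table) and transporting the explicit birational transformations between the two realizations of $S_{3}$ (used in \cite{CM} for the pair $(32,69)$) and of $S_{6}$ across the fiber product, the remaining work being the pair-by-pair verification that the complementary factor and the fiber matchings agree. Your write-up is in fact more detailed than the paper's, which records only this strategy in the paragraph preceding the theorem, and your reading of the fibration types of the thirteen pairs from the tables is accurate.
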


\begin{theorem}
  There exist correspondences between the following pairs of double
  octic Calabi--Yau threefolds
  \BBB 1 238 , \BBB 32 93 , \BBB 238 241 , \BBB 240 245 , \BBB 2 242 ,
  \BBB 8 249 , \BBB 10 242 , \BBB 247 252 .
\end{theorem}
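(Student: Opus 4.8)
The plan is to obtain every correspondence from the presentation of both members of a pair as fibrewise Kummer constructions, building the cycle on the product from a fibrewise isogeny or a quadratic base change applied to one of the two elliptic factors. First I would fix, for each arrangement, a pair of opposite fourfold points, read off from its incidence table by the rule of Section~5; this exhibits the double octic birationally as the quotient by the simultaneous elliptic involution $(u,v)\mapsto(-u,-v)$ of a fiber product $S\times_{\PP^1}S'$ of two rational elliptic surfaces, the Kodaira types and positions of the singular fibres of $S$, $S'$ and their matching over $\PP^1$ being exactly the data collected in the Results section. The matching---which singular fibre of $S$ lies over which singular fibre of $S'$---is governed by the incidence table through the criterion at the end of Section~5, and it is this combinatorial matching, rather than the literal positions on $\PP^1$, that must be reproduced.

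Next I would split the eight pairs according to the mechanism. The pairs $(238,241)$, $(240,245)$, $(32,93)$ and $(247,252)$ share one elliptic factor, while their second factors are both of type $S_2$ with the roles of the $I_2$ and $I_4$ fibres interchanged; here I would use the self-isogeny $\phi\colon S_2\to S_2'$ over $\PP^1$ of Section~4, which fixes the base and exchanges the $I_2$ and $I_4$ fibres. Taking the product with the identity on the common factor $T$ gives a fibrewise isogeny $S_2\times_{\PP^1}T\lra S_2'\times_{\PP^1}T$; since an isogeny commutes with $[-1]$ on every fibre it is equivariant for the simultaneous involutions defining the two Kummer quotients, so it descends to a finite dominant rational map of the two double octics, whose graph is the desired correspondence. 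For the remaining pairs $(1,238)$, $(2,242)$, $(10,242)$ and $(8,249)$ one arrangement carries an $S_1$, $S_3$ or $S_4$ fibration and its partner the fibration obtained from it by a degree-two base change; here I would pull the whole fiber product back along a double cover $\PP^1\lra\PP^1$, whose two projections are finite correspondences. Kodaira's transformation rules for a quadratic base change ($I_n^*$ over a branch point becomes $I_{2n}$, an $I_k$ over a branch point becomes $I_{2k}$, and a fibre away from the branch locus is duplicated) show that branching over the two points supporting the starred fibres---each carrying an $I_2^*$ or $I_0^*$ of one factor and an ordinary $I_2$ of the other---turns $S_4=(I_2^*,I_2,I_2)$ into $S_2$ and $S_3=(I_0^*,I_2,I_2,I_2)$ into $S_5$. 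Applied simultaneously to both factors this sends Arr.~1 ($S_4\times S_4$) to Arr.~238 ($S_2\times S_2$); it likewise sends Arr.~2 ($S_4\times S_4$) and Arr.~10 ($S_3\times S_4$) to the two presentations $S_2\times S_2$ and $S_5\times S_5$ of Arr.~242, and Arr.~8 ($S_3\times S_4$) to the presentation $S_2\times S_5$ of Arr.~249. Composing the base-change correspondence with the resulting isomorphism of fiber products and descending through the Kummer quotient produces the correspondence.

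The main obstacle I expect is the bookkeeping that pins down a single transformation serving both factors at once. For the isogeny pairs one must check that after swapping $I_2$ and $I_4$ in one factor the matching of its fibres with those of the unchanged factor, read modulo $\mathrm{PGL}_2$ on the base, reproduces exactly the incidence table of the partner---the differing literal positions (for instance $\{\infty,0,1,-1\}$ for Arr.~32 against $\{\infty,0,1,\tfrac12\}$ for Arr.~93) being absorbed by an auxiliary M\"obius change of the base. For the base-change pairs one must exhibit a single pair of branch points simultaneously converting both factors into the types \emph{and} matchings tabulated for the partner; the computations above indicate that $\{\infty,0\}$ works in each case, but verifying that the induced matching, and not merely the list of fibre types, agrees with the partner's incidence table is the delicate point, since this is where the branch locus (or the kernel of the isogeny) is forced to exist for both factors at once. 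Once the matchings are shown to coincide, descent through the fibrewise Kummer quotient is automatic, as every map involved commutes with $[-1]$ on the elliptic fibres.
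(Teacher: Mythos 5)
Your proposal is correct and coincides with the paper's own (essentially one-sentence) justification of this theorem: the correspondences are obtained exactly from the self-isogeny of $S_{2}$ exchanging $I_{2}$ and $I_{4}$ fibres for the pairs $(32,93)$, $(238,241)$, $(240,245)$, $(247,252)$, and from quadratic pull-backs of the $S_{1}$, $S_{3}$, $S_{4}$ fibrations for $(1,238)$, $(2,242)$, $(8,249)$, $(10,242)$, with descent through the fibrewise Kummer quotient as you describe. One small adjustment that your own ``delicate point'' check would uncover: for $(8,249)$ the pull-back branched at the two starred fibres produces an $I_{4}$--$I_{4}$ incidence absent from arrangement 249's matching, so there the base change must additionally be composed with the $S_{2}$-isogeny --- both tools are already in your argument, and correspondences compose.
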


From the incidence tables used in the classification algorithm we have
derived the groups of permutation of planes that preserves the
incidences. With simple linear algebra one can check which symmetries
correspond to actual automorphisms of the Calabi--Yau threefold. For
any of eleven rigid double octics defined over $\QQ$ the answer is
always yes, as it was already verified in  \cite{BKC} for the
arrangement no. 238 (the one with the largest symmetry group).
In remaining cases the situation is more complicated, invariant
permutation may correspond to an isomorphism with another
member of family (in the case of a family defined over $\QQ$),
isomorphism with the Galois conjugate example (arrangements A, B, C)
or element of the conjugate family (arrangement D). In fact all the
possible phenomenons occurred, since discussion of all examples is
beyond the scope of current paper we shall only give some examples
check if the two Galois--conjugate arrangements of types A, B, C, D
are projectively equivalent and list examples of maximal automorphisms
(cf. \cite{JCR}). 

\begin{theorem}
  The Galois--conjugate arrangements of type A, B or D are
  projectively equivalent, the Galois--conjugate arrangements of type C
  are not projectively equivalent. 
\end{theorem}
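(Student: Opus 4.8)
The plan is to reduce projective equivalence to a finite computation on the moduli of each combinatorial type, and then to weigh the size of the combinatorial symmetry group against the effect of Galois conjugation. Recall from the algorithm that, after fixing a quintuple of planes in general position as the standard frame $x,y,z,t,x+y+z+t$, an octic arrangement of a prescribed combinatorial type is completely determined by the coefficients of the remaining three planes; these coefficients are the moduli. Combining this normalization with the characterization of combinatorial equivalence by minimal incidence tables, one gets that two combinatorially equivalent arrangements are projectively equivalent precisely when their moduli lie in a single orbit of the combinatorial symmetry group $G\subset S_8$: a relabelling in $G$ changes which quintuple is the frame and hence transforms the moduli by a computable substitution, while conversely any projective map between the two arrangements must carry the frame of one to five general planes of the other, and such a choice is exactly an element of $G$. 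Since each of $A,B,C,D$ shares its minimal incidence table with its Galois conjugate, the two are automatically combinatorially equivalent, and because the standard frame is defined over $\QQ$, the moduli of the conjugate arrangement are the conjugates $\overline{m}$ of the moduli $m$ of the original. The question therefore becomes whether there is $\sigma\in G$ with $m=\sigma\cdot\overline{m}$, a finite check.

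For the arrangements $A$, $B$ and $D$ I would exhibit the realizing relabelling directly. Their symmetry groups are comparatively large ($S_3$, $D_4$ and $S_3$), and the strategy is to normalize to the standard frame, apply each generator of $G$, renormalize, and compare the resulting coefficients of the three free planes with the conjugate of the original. One generator reproduces exactly the conjugate configuration; unwinding the two normalizations then yields an explicit matrix $M\in GL_4(\QQ(\sqrt{-3}))$ carrying the arrangement to its conjugate, which proves projective equivalence. For $D$, which is a one--parameter family, the same computation additionally exhibits the reparametrization $(A,B)\mapsto(A',B')$ under which the conjugate family coincides with the original, so that the two families are projectively equivalent as families rather than merely memberwise.

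For $C$ the claim is \emph{non}--equivalence, and here the finite check is especially short because the symmetry group of $C$ is only $C_2$: there are just two relabellings to test. I would normalize $C$, obtaining three free planes with coefficients in $\QQ(\sqrt5)$, apply the single nontrivial $g\in C_2$, renormalize, and compare with the conjugate; both $g=\mathrm{id}$ and the nontrivial $g$ fail to match. The obstruction is already visible in the associated elliptic fibration, whose singular fibers sit at $0,1,\infty$ and at $p=-\tfrac12-\tfrac{\sqrt5}2$, $q=\tfrac12-\tfrac{\sqrt5}2$, with a single distinguished $I_4$ fiber among otherwise $I_2$ fibers. One computes $pq=1$ and $p+q=-\sqrt5$, whereas conjugation sends the pair to one with $\bar p+\bar q=+\sqrt5$; the lone $C_2$ relabelling induces a M\"obius transformation of the base that fixes the distinguished $I_4$ fiber and cannot reverse this sign. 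Hence $C$ and its conjugate lie in distinct $G$--orbits and are not projectively equivalent.

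The main obstacle is the completeness of the non--equivalence argument for $C$: one must be certain that the finite $G$--orbit test is exhaustive, i.e.\ that every projective equivalence between two arrangements of this combinatorial type is induced by a relabelling in $G$ acting on the frame. Granting that reduction---which rests on the rigidity of a five--plane frame together with the proposition identifying combinatorial equivalence with equality of minimal incidence tables---the remaining labour is purely computational. For $A$, $B$ and $D$ the only real work is the bookkeeping of the two normalizations, and no conceptual difficulty arises once the correct generator of $G$ is identified.
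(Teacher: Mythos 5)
Your proposal is correct and takes essentially the same route as the paper: the paper likewise reduces the question to a finite check over the incidence-preserving permutation group and exhibits explicit projective transformations realizing a chosen permutation for each of A, B and D, while for C it observes that the only nontrivial symmetry $(17)(25)(38)(46)$ corresponds to a projective transformation mapping the arrangement onto \emph{itself} rather than onto its conjugate, so no relabelling in $C_2$ can realize an equivalence with the conjugate. One small caveat: your supplementary ``visible obstruction'' for C is not rigorous as stated, since $p+q$ is not a M\"obius invariant of the singular-fiber positions (the correct invariants are cross-ratios respecting the fiber-type markings), but your primary finite verification over $C_2$ does not rely on it.
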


\begin{proof}
Permutation $(2,5)(3,7)(4,6)$ corresponds to the
following automorphism 
\[
  \begin{pmatrix}
    x\\y\\z\\t
  \end{pmatrix}
\longmapsto
\begin{pmatrix}
(\sqrt{-3}-1)x\\ (-\sqrt{-3}+1)(x+y)\\
  (-\sqrt{-3}-1)x-2y+(-\sqrt{-3}-1)z\\ (-\sqrt{-3}-1)(x+y+z-t)
\end{pmatrix}
\]
of the projective space $\PP^{3}$ which transforms the arrangement of
type $A$ to its Galois conjugate. 

 Permutation $(13)(24)(56)(78)$ corresponds to the
following automorphism 
\[
(x,y,z,t)\mapsto (z,-t,x,-y)
\]
of the projective space $\PP^{3}$ which transforms the arrangement of
type $A$ to its Galois conjugate. 

Permutation $(3, 5), (4, 7), (68)$
corresponds to the following automorphism 
\[  \begin{pmatrix}
    x\\y\\z\\t    
  \end{pmatrix}
\longmapsto
\begin{pmatrix}
4\,Bx\\ \left( -2\,B\sqrt {-3}+A\sqrt {-3}+2\,B-A \right) y\\ 
\left( 2\,A-4\,B \right) x+ \left( 2\,A-4\,B \right) y+ \left( 2\,A-4\,B \right) z+ \left( 2\,A-4\,B
 \right) t\\
-2\,Ax+ \left( -A\sqrt {-3}-A+2\,B+2\,B\sqrt {-3} \right) y+ \left(
  -2\,A+4\,B \right) t
\end{pmatrix}
\]
of the projective space $\PP^{3}$ which transforms the fiber $(A:B)$ of first family to the fiber
over $(2A:A-2B)$ of the conjugate family.

The only non--trivial symmetry of arrangements of type $C$ is 
$((1, 7), (2, 5), (3, 8), (4, 6))$ and it corresponds to the projective
transformation
\[  \begin{pmatrix}
    x\\y\\z\\t    
  \end{pmatrix}
\longmapsto
\begin{pmatrix}
2\,x+2\,y+ \left( \sqrt {5}-1 \right) t\\ 
\left( -\sqrt {5}+1 \right) x+ \left( -\sqrt {5}+1 \right) y+ \left( -\sqrt {5}+1 \right) z\\ \left( \sqrt {5}-3 \right) x-2
\,y+ \left( \sqrt {5}-1 \right) z+ \left( -\sqrt {5}+1 \right) t\\
\left( -\sqrt {5}+1 \right) y+2\,z-2\,t
\end{pmatrix}
\]
which maps an arrangement of type C onto the same octic arrangement,
\end{proof}

Rohde (\cite{JCR}) used the notion of maximal automorphism of a
Calabi--Yau threefolds to study density of CM--points and give
examples of families of Calabi--Yau threefolds without a point of
Maximal Unipotent Monodromy
\begin{defn}
  A maximal automorphism of a family of Calabi--Yau threefolds is an
  automorphism of a smooth fiber which extends to the local universal
  deformation space.
\end{defn}
\begin{prop}
  One parameter families of double octic Calabi--Yau threefolds
  defined by arrangements No. No. 4, 13, 34, 72, 261 have maximal
  automorphisms acting on $H^{3,0}$ as multiplication by $i$. 
\end{prop}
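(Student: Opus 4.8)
The plan is to reduce the statement to an eigenvalue computation on the holomorphic three--form, using that every automorphism of a double octic under consideration is induced by a permutation $\sigma$ of the eight planes which lifts to a projective transformation of $\PP^{3}$. First I would realize the singular model of the double octic as the hypersurface $X=\{w^{2}=f\}\subset\PP(1,1,1,1,4)$, where $f=f_{1}\cdots f_{8}$ is the product of the eight linear forms and $w$ has weight $4$; the adjunction count $-(1+1+1+1+4)+8=0$ shows $K_{X}$ is trivial and the generator of $H^{3,0}(X)$ is the residue $\omega=\operatorname{Res}_{X}\bigl(\Omega/(w^{2}-f)\bigr)$ of the weighted Euler form $\Omega$. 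A symmetry of the arrangement produces $M\in GL_{4}(\CC)$ with $\phi^{*}f_{i}=\lambda_{i}f_{\sigma(i)}$, hence $\phi^{*}f=\lambda f$ with $\lambda=\prod_{i}\lambda_{i}$; lifting to $X$ by $w\mapsto\mu w$ with $\mu^{2}=\lambda$ turns $w^{2}-f$ into $\lambda(w^{2}-f)$, while $\Omega\mapsto(\det M)\,\mu\,\Omega$. Taking residues gives
\[
\alpha^{*}\omega=\frac{(\det M)\,\mu}{\lambda}\,\omega=\frac{\det M}{\mu}\,\omega .
\]
The two signs of $\mu$ correspond to composing $\alpha$ with the covering involution (which acts by $-1$ on $\omega$), and the scale--invariant condition $(\det M)^{2}=-\lambda$ is independent of the chosen $GL_{4}$--representative of $\phi$ (under $M\mapsto cM$ one has $\det M\mapsto c^{4}\det M$ and $\lambda\mapsto c^{8}\lambda$). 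Thus $\alpha$ acts on $H^{3,0}$ by a primitive fourth root of unity precisely when $(\det M)^{2}=-\lambda$, and after fixing the sheet the eigenvalue is $i$.

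Next, for each of the arrangements No.~$4,13,34,72,261$ I would single out a generator of the listed symmetry group which \emph{fixes the family parameter} $(A:B)$, i.e.\ maps every smooth member of the one--parameter family to itself rather than to another member --- this is the case producing an honest automorphism of each fibre, as opposed to the ``isomorphism with another member'' phenomenon noted in the Results section. Then, by the ``simple linear algebra'' already invoked in the text, I would solve for the lift $M$ from the equations $\phi^{*}f_{i}=\lambda_{i}f_{\sigma(i)}$, read off the scalars $\lambda_{i}$, and compute $\lambda=\prod_{i}\lambda_{i}$ and $\det M$. For No.~$34,72,261$ the natural generator is an order--four element of the symmetry group (e.g.\ the four--cycle $(3745)$ inside a generator of No.~$72$, the four--cycle $(1324)$ for No.~$261$, or the product $(25)(78)\cdot(26)(35)=(2653)(78)$ for No.~$34$); for No.~$4$ and No.~$13$, whose symmetry groups $D_{6}$ and $S_{3}\oplus C_{2}\oplus C_{2}$ contain no element of order four, the relevant generator is an involution of the plane arrangement whose lift squares to the covering involution, so that $\alpha$ has order four on $X$ and already acts by $i$. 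Finally I would verify the single identity $(\det M)^{2}=-\lambda$ in each of the five cases and fix $\mu$ so that $\det M/\mu=i$.

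To conclude, I would establish maximality. Because each of these families has $h^{1,2}=1$, the discussion following the Euler--characteristic formula identifies the one--parameter family with the Kuranishi (universal) deformation of a smooth fibre; the automorphism $\alpha$ fixes that fibre, hence induces a linear endomorphism of the one--dimensional deformation space $H^{1}(X,T_{X})\cong H^{2,1}(X)$. A linear map of a one--dimensional space is multiplication by a scalar $\nu$, so $\alpha$ lifts to an automorphism of the total space of the universal deformation covering $t\mapsto\nu t$ on the base, which is exactly the condition that $\alpha$ be a maximal automorphism in the sense of the preceding definition; by construction it acts on $H^{3,0}$ as multiplication by $i$. I expect the only genuinely delicate point to be the middle step: producing the correct $GL_{4}$--lift $M$ for each arrangement and pinning down the branch multiplier $\lambda$, so that the scale--invariant equality $(\det M)^{2}=-\lambda$ can be checked; everything else is either the residue bookkeeping above or the one--dimensional extension argument.
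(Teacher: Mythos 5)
Your general mechanism is exactly the one underlying the paper's proof, which consists simply in exhibiting, for each of the five arrangements, a permutation of planes together with an explicit lift to the double cover (for No.~4 the permutation $(1,5)(3,6)(7,8)$ and the map $(x,y,z,t,u)\mapsto(x+y,-y,y+z,t,iu)$, and so on). Your residue bookkeeping $\alpha^{*}\omega=(\det M/\mu)\,\omega$ with $\mu^{2}=\lambda$, the scale--invariant criterion $(\det M)^{2}=-\lambda$, and the observation that extension to the one--dimensional Kuranishi family is automatic are all correct, and your treatment of Nos.~4, 13 and 261 agrees with the paper: for 4 and 13 the paper indeed uses plane--involutions with $\lambda=-1$ whose lifts square to the covering involution, and for 261 it uses the order--four permutation $(1,3,2,4)(5,8)(6,7)$ with lift $(x,y,z,t,u)\mapsto(Bz,Bt,Ay,Ax,iA^{2}B^{2}u)$.

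The genuine gap is your middle step for Nos.~34 and 72. First, $(3745)$ is not an element of the symmetry group of No.~72 at all: that group is $D_{4}=\langle(26)(3745),(26)(34)\rangle$, whose only order--four elements are $(26)(3745)^{\pm1}$; the four--cycle alone does not preserve the incidence table. Second, and more seriously, no order--four permutation in the $D_{4}$ of No.~34 or of No.~72 fixes the family parameter, which your own plan requires: the paper's table of horizontal self--transformations lists three nontrivial parameter maps for No.~34 (namely $(A,-B)$, $(B,A)$, $(B,-A)$) and three for No.~72, so the kernel of the map from $D_{4}$ to the group of parameter transformations has order two, and its nontrivial element is an involution of planes --- $(2,5)(3,6)$ for No.~34 and $(3,4)(5,7)$ for No.~72, precisely the permutations the paper uses, with lifts $(x,y,z,t,u)\mapsto(x,-x-y,-z-x,-t,iu)$ and $(x,y,z,t,u)\mapsto(x,-y,-t,-z,-iu)$ respectively. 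Your proposed element $(2653)(78)$ for No.~34 squares to $(2,5)(3,6)$ but itself induces a nontrivial parameter map, so generically it is an isomorphism with a \emph{different} member of the family, not an automorphism of a fibre; your identity $(\det M)^{2}=-\lambda$ would then be checked for the wrong kind of map (and even at a fixed point of the parameter map the eigenvalue on $H^{3,0}$ would be a primitive eighth root of unity, since its square acts by $\pm i$, contradicting the claimed eigenvalue $i$). The repair is already contained in your own handling of Nos.~4 and 13: for all four arrangements 4, 13, 34, 72 one takes a plane--involution with $\lambda=-1$, whose lift multiplies $u$ by $\pm i$ and acts on $\omega$ by $\det M/\mu=i$; only No.~261 uses an order--four permutation.
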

\begin{proof}
  We give an example of maximal automorphism for each of the above
  arrangements, we  also list the corresponding  permutation of
  planes 

\noindent Arr. No. 4  \(\quad\displaystyle ((1, 5), (3, 6), (7, 8)) \)

\(\displaystyle (x,y,z,t,u)\longmapsto (x+y, -y, y+z, t, iu)\)

\noindent Arr. No. 13 \(\quad\displaystyle  ((1, 3), (5, 6))\)

\(\displaystyle (x,y,z,t,u)\longmapsto (z, y, x, -t, -iu)\)

\noindent Arr. No. 34 \(\quad\displaystyle  ((2, 5), (3, 6))\)

\(\displaystyle (x,y,z,t,u)\longmapsto (x, -x-y, -z-x, -t, iu)\)

\noindent Arr. No. 72 \(\quad\displaystyle  ((3, 4), (5, 7))\)

\(\displaystyle (x,y,z,t,u)\longmapsto (x, -y, -t, -z, -iu)\)
 
\noindent Arr. No. 261 \(\quad\displaystyle  ((1, 3, 2, 4), (5, 8), (6, 7))\)

\(\displaystyle (x,y,z,t,u)\longmapsto (Bz, Bt, Ay, Ax, iA^2B^2u)\)
 \end{proof}

Using \cite[Thm. 7]{JCR} we can deduce the following corollary
\begin{cor}
  One parameter families  of double octic Calabi--Yau threefolds
  defined by arrangements No. No. 4, 13, 34, 72, 261 do not have a
  point of Maximal Unipotent Monodromy.
\end{cor}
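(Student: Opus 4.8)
The plan is to obtain the statement directly from the preceding Proposition together with Rohde's criterion \cite[Thm.~7]{JCR}. For each of the arrangements No. 4, 13, 34, 72, 261 the Proposition exhibits an explicit automorphism $\phi$ of the general fibre that is \emph{relative} over the one--dimensional base: the displayed formulas stabilise each member of the family (the linear part together with the permutation of planes sends the octic arrangement to itself for every value of the parameters), so $\phi$ extends over the whole deformation space and is a maximal automorphism in the sense of the definition above. Moreover $\phi$ acts on the one--dimensional space $H^{3,0}$ by multiplication by $\pm i$. Since $\pm i$ is a primitive fourth root of unity, in particular $\pm i\neq\pm1$, the hypothesis of \cite[Thm.~7]{JCR} is satisfied and that theorem gives exactly the absence of a point of Maximal Unipotent Monodromy.

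To locate the content of \cite[Thm.~7]{JCR}, I would recall the mechanism it encapsulates. Assume for contradiction that the compactified base carried a MUM point, with local monodromy $T=e^{N}$ maximally unipotent, so $N^{3}\neq0$ and $N^{4}=0$ on $H^{3}$. Being relative, $\phi$ induces a flat automorphism of the local system $R^{3}\pi_{*}\QQ$ and therefore commutes with $N$; as the weight filtration is determined by $N$ and the Hodge filtration is $\phi$--stable, $\phi$ respects both. At a MUM point of a threefold the limiting mixed Hodge structure is Hodge--Tate with one--dimensional graded pieces, and $N^{3}$ maps the Hodge line $F^{3}=H^{3,0}$ isomorphically onto the lowest piece $\mathrm{Gr}^{W}_{0}$. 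Hence the eigenvalue of $\phi$ on $H^{3,0}$ equals its eigenvalue on $\mathrm{Gr}^{W}_{0}$; but $\mathrm{Gr}^{W}_{0}$ is a one--dimensional rational line, on which a finite--order automorphism defined over $\QQ$ acts by $\pm1$. This contradicts the value $\pm i$, ruling out the MUM point.

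Consequently the only things that genuinely require verification are the two inputs to \cite[Thm.~7]{JCR}: that each displayed map is a maximal (fibrewise) automorphism, which follows because the associated permutation preserves the incidence table and hence lifts to a projective transformation stabilising the arrangement of \emph{every} member simultaneously; and that the action on $H^{3,0}$ is by $\pm i$ rather than $\pm1$, which is the local computation encoded in the last coordinate $u$ of each formula (for No. 261 the factor $iA^{2}B^{2}$ is normalised against the Jacobian $-A^{2}B^{2}$ of the linear part, again leaving multiplication by $\pm i$ on the holomorphic three--form). The main obstacle is thus conceptual rather than computational: the heart of the matter is the incompatibility of a \emph{non--real} eigenvalue on $H^{3,0}$ with maximal unipotent monodromy via the rationality of $\mathrm{Gr}^{W}_{0}$, and this is precisely what \cite[Thm.~7]{JCR} supplies, so once maximality and the eigenvalue $\pm i$ are read off from the Proposition no further work with the explicit families is needed.
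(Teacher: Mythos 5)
Your proposal follows exactly the paper's route: the corollary is obtained immediately by combining the preceding Proposition (which exhibits, for each of the arrangements No.\ 4, 13, 34, 72, 261, a maximal automorphism acting on $H^{3,0}$ by multiplication by $i$) with Rohde's criterion \cite[Thm.~7]{JCR}, which is all the paper itself says. Your additional sketch of the mechanism behind Rohde's theorem (non-real eigenvalue on $H^{3,0}$ versus the rational line $\mathrm{Gr}^{W}_{0}$ at a MUM point) is correct supplementary exposition but not a different argument.
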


The projective automorphism 
\[(x,y,z,t,u)\longmapsto(Bz,By,Bx,At,AB^{3}u)\]
tranforms the equation
\[u^{2}-xyzt(x+y)(y+z)(z+t)(Ax+Bt)\]
of the double octic no. 2 into
\[A^{2}B^{6}(u^{2}-xyzt(x+y)(y+z)(z+t)(Bx+At))\]
the equation of the element corresponding to the parameter $(B:A)$
(multiplied by $A^{2}B^{6}$), and so yields a horizontal
transformation $(A:B)\mapsto (B:A)$ of the family. Consequently this
family is a quadratic pull--back of another family of Calabi--Yau
threefolds.  

We found 54 examples of self transformations of one parameter
families, in the table we collect only the transformation of the
parameter. 

\bigskip

\def\arraystretch{1.2}
\noindent\begin{longtable}{c|l}
\hline Arr. No&\rule{1cm}{0cm}($A,B)\longmapsto \ldots$\\
\hline 2\rule{3mm}{0mm} & \mbox{$(B,A)\ $}\\
\hline 4\rule{3mm}{0mm} & \mbox{$(A,A-B)\ $}\\
\hline 5\rule{3mm}{0mm} & \mbox{$(A,2\,A-B)\ $}\\
\hline 10\rule{3mm}{0mm} & \mbox{$(A,-A-B)\ $}\\
\hline 13\rule{3mm}{0mm} & \mbox{$(A,-A-B),\ $}\mbox{$(B,A),\ $}
                           \mbox{$(B,-A-B),\ $}\mbox{$(A+B,-A),\ $}\mbox{$(A+B,-B)\ $}\\
\hline 21\rule{3mm}{0mm} & \mbox{$(A,-A-B)\ $}\\
\hline 34\rule{3mm}{0mm} & \mbox{$(A,-B),\ $}\mbox{$(B,A),\ $}\mbox{$(B,-A)\ $}\\
\hline 36\rule{3mm}{0mm} & \mbox{$(A+B,-B)\ $}\\
\hline 53\rule{3mm}{0mm} & \mbox{$(B,A)\ $}\\
\hline 71\rule{3mm}{0mm} & \mbox{$(A,-A-B)\ $}\\
\hline 72\rule{3mm}{0mm} & \mbox{$(A,A-B),\ $}\mbox{$(-2\,B+A,-B),\ $}\mbox{$(-2\,B+A,A-B)\ $}\\
\hline 73\rule{3mm}{0mm} & \mbox{$(A,-A-B)\ $}\\
\hline 96\rule{3mm}{0mm} & \mbox{$(A+B,-B)\ $}\\
\hline 97\rule{3mm}{0mm} & \mbox{$(A+B,-B)\ $}\\
\hline 98\rule{3mm}{0mm} & \mbox{$(A,A-B)\ $}\\
\hline 99\rule{3mm}{0mm} & \mbox{$(A,-A-B)\ $}\\
\hline 100\rule{3mm}{0mm} & \mbox{$(A,-A-B)\ $}\\
\hline 144\rule{3mm}{0mm} & \mbox{$(A,-A-B)\ $}\\
\hline 152\rule{3mm}{0mm} & \mbox{$(B,A)\ $}\\
\hline 155\rule{3mm}{0mm} & \mbox{$(A,-A-B)\ $}\\
\hline 198\rule{3mm}{0mm} & \mbox{$(A,-2\,A-B)\ $}\\
\hline 200\rule{3mm}{0mm} & \mbox{$(A,-A-B),\ $}\mbox{$(B,A),\ $}\mbox{$(B,-A-B),\ $}\mbox{$(A+B,-A),\ $}\mbox{$(A+B,-B)\ $}\\
\hline 242\rule{3mm}{0mm} & \mbox{$(A,-A-B),\ $}\mbox{$(A+2\,B,-B),\ $}\mbox{$(A+2\,B,-A-B)\ $}\\
\hline 247\rule{3mm}{0mm} & \mbox{$(A+2\,B,-B)\ $}\\
\hline 248\rule{3mm}{0mm} & \mbox{$(A,-2\,A-B)\ $}\\
\hline 249\rule{3mm}{0mm} & \mbox{$(A,-A-B)\ $}\\
\hline 259\rule{3mm}{0mm} & \mbox{$(B,A)\ $}\\
\hline 261\rule{3mm}{0mm} & \mbox{$(A,-B),\ $}\mbox{$(B,A),\ $}\mbox{$(B,-A)\ $}\\
\hline 264\rule{3mm}{0mm} & \mbox{$(A,-B)\ $}\\
\hline 267\rule{3mm}{0mm} & \mbox{$(A,A-B),\ $}\mbox{$(B,A),\ $}\mbox{$(B,B-A),\ $}\mbox{$(A-B,A),\ $}\mbox{$(A-B,-B)\ $}\\
\hline 270\rule{3mm}{0mm} & \mbox{$(A+B,-2\,A-B)\ $}\\
\hline 273\rule{3mm}{0mm} & \mbox{$(A+2\,B,-B)\ $}\\
\hline 274\rule{3mm}{0mm} & \mbox{$(B,A)\ $}\\
\hline 276\rule{3mm}{0mm} & \mbox{$(B,A)\ $}\\
\hline
\end{longtable}

\bigskip

Similarly, the projective automorphism 
\[(x,y,z,t,u)\longmapsto(A(y+z),-Ay,A(x+y),Bt,A^{3}Bu)\]
tranforms the equation
\[u^{2}-xyzt(x+y)(y+z)(Ax+By+Bz-At)(Ax+Ay+Bz-At)\]
of the double octic no. 2 into
\[A^{6}B^{2}(u^{2}-(-\tfrac BA)xyzt(x+y)(y+z)(Bx+Ay+Az-Bt)(Bx+By+Az-Bt) )\]
the equation of the quadratic twist by $\sqrt{-\frac BA}$ of the element corresponding to the parameter $(B:A)$
(multiplied by $A^{6}B^{2}$). We get only  locally a transformation of
the family overlying the map $(A:B)\mapsto (B:A)$.

We found 32 examples of twisted transformations of one parameter
families, again in the table we collect only the transformation of the
parameter. 

\bigskip

\def\arraystretch{1.2}
\begin{longtable}{c|l}
\hline Arr. No&\rule{1cm}{0cm}($A,B)\longmapsto \ldots$\\
\hline 4 & \mbox{$(B,A)\ $}\mbox{$(B,B-A)\ $}\mbox{$(A-B,A)\ $}\mbox{$(A-B,-B)\ $}\\
\hline 16 & \mbox{$(A+B,-B)\ $}\\
\hline 35 & \mbox{$(A,A-B)\ $}\\
\hline 144 & \mbox{$(A+2\,B,-B)\ $}\mbox{$(A+2\,B,-A-B)\ $}\\
\hline 153 & \mbox{$(A+B,-B)\ $}\\
\hline 155 & \mbox{$(B,A)\ $}\mbox{$(B,-A-B)\ $}\mbox{$(A+B,-A)\ $}\mbox{$(A+B,-B)\ $}\\
\hline 197 & \mbox{$(A,-A-B)\ $}\\
\hline 244 & \mbox{$(B,A)\ $}\\
\hline 246 & \mbox{$(A,-A-B)\ $}\\
\hline 256 & \mbox{$(B,4\,A)\ $}\\
\hline 257 & \mbox{$(2\,A-B,-2\,B)\ $}\\
\hline 264 & \mbox{$(B,-4\,A)\ $}\mbox{$(B,4\,A)\ $}\\
\hline 265 & \mbox{$(A,4\,A-B)\ $}\\
\hline 266 & \mbox{$(2\,A,-A-2\,B)\ $}\mbox{$(4\,B,A)\ $}\mbox{$(4\,B,-A-2\,B)\ $}\mbox{$(A+2\,B,-B)\ $}\mbox{$(2\,A+4\,B,-A)\ $}\\
\hline 273 & \mbox{$(2\,A,-A-2\,B)\ $}\mbox{$(4\,B,A)\ $}\mbox{$(4\,B,-A-2\,B)\ $}\mbox{$(2\,A+4\,B,-A)\ $}\\
\hline 275 & \mbox{$(A-3\,B,A+B)\ $}\mbox{$(A+3\,B,B-A)\ $}\\
\hline
\end{longtable}

\end{document}